\title{The Maximal Rank Conjecture}
\begin{document}
\maketitle

\begin{abstract}
Let $C$ be a general curve of genus $g$, embedded in $\pp^r$
via a general linear series of degree $d$.
In this paper, we prove the \emph{Maximal Rank Conjecture},
which determines the Hilbert function of $C \subset \pp^r$.
\end{abstract}

\section{Introduction}

A central object of study in algebraic geometry in the past couple of centuries
has been algebraic curves in complex projective space.
(In this paper, we work exclusively over $\cc$.)
These can be described in two basic ways:
\begin{itemize}
\item Parametric Coordinates: We take an abstract curve $C$ of genus $g$,
pick a line bundle $\mathcal{L}$ on $C$ of degree $d$, and $r + 1$ linearly independent
sections of $H^0(\mathcal{L})$ whose span is basepoint free.
This gives rise to a degree $d$ map $C \to \pp^r$.
\item Cartesian Coordinates: We take a saturated homogeneous ideal $I \subset \cc[x_0, x_1, \ldots, x_r]$
of height $r - 1$. This gives rise to a curve $V(I) \subset \pp^r$.
\end{itemize}
A natural question is: \emph{How do these points of view relate to each other?}

\medskip

The \emph{Brill--Noether theorem},
proven by Griffiths and Harris \cite{bn}, Gieseker \cite{gp}, Kleiman and Laksov \cite{kl}, and others,
describes the space of parametric curves
with general source:
If $C$ is a general curve of genus $g$, 
it states that there exists a nondegenerate degree $d$ map $C \to \pp^r$ if and only if
the \emph{Brill--Noether number $\rho(d, g, r)$} is nonnegative, where
\[\rho(d, g, r) \colonequals (r + 1) d - rg - r(r + 1);\]
and moreover, in this case, there exists a unique component
of Kontsevich's space of stable maps $\bar{M}_g(\pp^r, d)$
that both
dominates the moduli space of curves
$\bar{M}_g$ and whose general member is nondegenerate,
whose relative dimension over $\bar{M}_g$ is
\begin{equation} \label{reldim}
\rho(d, g, r) + \dim \aut \pp^r = \rho(d, g, r) + r(r + 2).
\end{equation}
We term stable maps corresponding to points in this component
\emph{Brill--Noether curves (BN-curves)}. Such stable maps
that lie in only one component will be termed \emph{interior} BN-curves.

For $r \geq 3$ --- which we suppose for the remainder of this paper --- it
is known that a general BN-curve
is an embedding of a smooth curve \cite{bneh}, and so we may identify it with its image.

A specific instance of the above natural question is: \emph{What does the homogeneous ideal
(``Cartesian equations'')
of a general BN-curve look like, in terms of $d$, $g$, and $r$?}

As a first step, one might ask for the dimension of the graded pieces
of the ideal, i.e.\ for space of polynomials
of each degree~$k$ that vanish on $C$, which can be described as the
kernel of the restriction map
\[H^0(\oo_{\pp^r}(k)) \to H^0(\oo_C(k)).\]

The dimensions of these spaces are known: When $k \geq 3$,
the condition $\rho(d, g, r) \geq 0$ implies upon rearrangement that
$\deg \oo_C(k) = kd \geq \frac{rk}{r + 1} \cdot g + rk \geq 2g - 1$,
so $\oo_C(k)$ is nonspecial.
When $k = 2$, the result follows from the Gieseker--Petri theorem \cite{gp}, which states that the tensor product map
$H^0(\oo_C(1)) \otimes H^0(K_C(-1)) \to H^0(K_C)$
is injective: Indeed, restricting to a basepoint-free pencil in $H^0(\oo_C(1))$,
we obtain a map
$H^0(K_C(-1))^{\oplus 2} \to H^0(K_C)$
whose kernel is $H^0(K_C(-2))$; thus 
$H^0(K_C(-2)) = 0$, and so $\oo_C(2)$ is nonspecial.
When $k = 1$, one observes that for $d \geq g + r$, the line bundle $\oo_C(1)$ is general, and hence nonspecial.
Finally, $d \leq g + r$ implies
upon rearrangement that $\rho(d, g, r) + r(r + 2) > \rho(d, g, r + 1) + (r + 1)(r + 3) - (r + 2)$, so
the dimension estimate \eqref{reldim} implies that the general such $C$
cannot be a projection from $\pp^{r + 1}$,
and thus $\dim H^0(\oo_C(1)) = r + 1$.
Putting this together, one obtains:
\[\dim H^0(\oo_{\pp^r}(k)) = \binom{r + k}{k} \tand \dim H^0(\oo_C(k)) = \begin{cases}
r + 1 & \text{if $k = 1$ and $d \leq g + r$;} \\
kd + 1 - g & \text{otherwise.}
\end{cases}\]

The natural conjecture --- made originally by
Severi in 1915 \cite{severi} when the Brill--Noether theorem was still a conjecture --- is:

\begin{conj}[Maximal Rank Conjecture] \label{c:mrc}
If $C \subset \pp^r$ is a general BN-curve ($r \geq 3$), the restriction maps
\[H^0(\oo_{\pp^r}(k)) \to H^0(\oo_C(k))\]
are of maximal rank (i.e.\ either injective or surjective).

Or equivalently, the dimension of the space of polynomials
of degree $k$ which vanish on $C$ is given by
\[\begin{cases}
\binom{r + k}{k} - (kd + 1 - g) & \text{if $kd + 1 - g \leq \binom{r + k}{k}$ and $k \geq 2$;} \\
0 & \text{otherwise.}
\end{cases}\]
\end{conj}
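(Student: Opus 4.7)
The plan is to argue by induction on the invariants $(d,g,r,k)$, realizing a general BN-curve $C \subset \pp^r$ as a smoothing of a reducible curve $C_0 = C' \cup L$, where $L$ is a carefully chosen rational curve --- most often a $(k+1)$-secant line --- meeting a BN-curve $C'$ of smaller degree and genus in a prescribed finite scheme $\Gamma$. Since Brill--Noether numbers add correctly under such a union, $C_0$ lies in the closure of the interior BN-component for the target invariants, and semicontinuity reduces the maximal rank statement for $C$ to the analogous statement for $C_0$.

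For such a $C_0$, the ideal-sheaf sequence
\[0 \to \mathcal{I}_{C'/\pp^r}(k) \to \mathcal{I}_{C_0/\pp^r}(k) \to \mathcal{I}_{L,\Gamma/\pp^r}(k) \to 0\]
controls $\dim H^0(\mathcal{I}_{C_0}(k))$ in terms of (i) $\dim H^0(\mathcal{I}_{C'}(k))$, known inductively; (ii) the number of conditions that $\Gamma$ imposes on $H^0(\mathcal{I}_{C'}(k))$; and (iii) a direct cohomology calculation on the rational curve $L \subset \pp^r$. Thus the MRC for $(d,g,r,k)$ reduces to the MRC for smaller invariants together with an \emph{interpolation statement}: a general BN-curve passes through the expected number of general points, and such points impose independent conditions on its degree-$k$ ideal.

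This forces the inductive hypothesis to be strengthened to a joint assertion that tracks both the rank of the restriction map and the effect of a transverse point-scheme. Granted this strengthening, the inductive step becomes a Hirschowitz-style Horace argument: one specializes points of $\Gamma$ one at a time onto auxiliary subvarieties (a hyperplane section, or $L$ itself) and uses the resulting residual/trace exact sequences to peel off each contribution. The principal obstacle is the design of the induction --- selecting, in terms of the residues of $d$ and $g$ modulo $r$, the correct degeneration (secant line, conic, higher-degree rational normal curve, or union with an elliptic curve) so that the Brill--Noether and cohomological numerics align at every step. On top of this one must handle the numerous small $(d,g,r,k)$ base cases, where no asymptotic slack is available and the relevant cohomology must be computed by hand; the tightest of these are $k=2$ and $k=3$ at low genus, where neither side of the restriction map dominates.
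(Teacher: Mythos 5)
Your overall shape --- degenerate to a reducible curve, control the ideal sheaf by a trace/residual sequence, and strengthen the inductive hypothesis to something involving auxiliary point-schemes --- is the right genre of argument, but the specific degeneration and the specific strengthening you propose are exactly the ones that are known not to close up, and the paper's contribution is to replace both. First, a local issue: your exact sequence is written backwards ($\ii_{C_0}(k)\subset\ii_{C'}(k)$ since $C'\subset C_0$; the correct residual sequence has $\ii_{C_0}(k)$ as the sub, not the quotient). More seriously, attaching a $(k+1)$-secant line $L$ to $C'$ raises $d$ by $1$ and $g$ by $k$, so it changes neither $H^0(\ii(k))$ (any degree-$k$ form vanishing on $C'$ vanishes on the $k+1$ points of $\Gamma$ and hence on $L$) nor $\chi(\oo(k))$; it therefore cannot by itself drive an induction on $(d,g)$ for fixed $(r,k)$, and the residual schemes produced by your Horace step --- a curve together with points lying on a hyperplane section or on $L$, which are emphatically not general points --- are not covered by the hypothesis ``MRC plus general points impose independent conditions.'' This is precisely where the classical attempts stalled: the strengthening you name (interpolation through general points) does not reproduce itself under specialization.

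The paper's induction is instead on $(r,k)$: one degenerates $C$ to $C'\cup C''$ with $C''$ a BN-curve inside a hyperplane $H$, so that the sequence
\[
0 \to \ii_{B\subset\pp^r}(k-1) \to \ii_{T^\circ\subset\pp^r}(k) \to \ii_{T^\circ\cap H\subset H}(k) \to 0
\]
reduces degree $k$ in $\pp^r$ to degree $k$ in $\pp^{r-1}$ and degree $k-1$ in $\pp^r$ (Proposition~\ref{hir}). The strengthened statement (Theorem~\ref{main}) adjoins to $C$ not general points but \emph{hyperplane sections of auxiliary BN-curves $D_i\subset\pp^{r+1}$} together with general points; this is engineered so that both the trace $A\subset H$ and the residual $B\subset\pp^r$ are again configurations of the same shape, making the induction self-replicating. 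On top of this one needs the base cases $r=3$ and $k=2$, the smoothability results of \cite{rbn} guaranteeing that the reducible curves used are interior BN-curves (your sentence ``Brill--Noether numbers add correctly'' elides a genuinely hard theorem), and a large combinatorial verification that the required splittings $(d',g',h')$ exist --- none of which your sketch supplies. So the proposal identifies the right family of techniques but is missing the key idea that makes the induction close.
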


Previously, many special cases of the maximal rank conjecture have been studied
--- including
the case of rational curves in $\pp^3$ by Hirschowitz \cite{mrat};
the cases of nonspecial curves (i.e.\ the case $d \geq g + r$) \cite{ballns}
and space curves \cite{ball} by Ballico and Ellia;
the case of quadrics (i.e.\ for $k = 2$)
independently by Ballico \cite{ball2}, and by Jensen and Payne \cite{jp}; and many others.
In this paper, we give the first proof in full generality:

\begin{thm} \label{thm:mrc}
Conjecture~\ref{c:mrc} (the Maximal Rank Conjecture) holds.
\end{thm}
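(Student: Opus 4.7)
The plan is to prove the conjecture by induction on the parameters $(r, d, g, k)$, specializing a general BN-curve $C \subset \pp^r$ of type $(d, g)$ to a reducible nodal configuration $C_0 = C' \cup D$ whose components are BN-curves of strictly smaller invariants. Letting $\Gamma = C' \cap D$ be the (reduced) intersection scheme, the Mayer--Vietoris sequence
\[0 \to \mathcal{I}_{C_0}(k) \to \mathcal{I}_{C'}(k) \oplus \mathcal{I}_D(k) \to \mathcal{I}_{\Gamma}(k) \to 0\]
reduces the maximal rank statement for $C_0$ to maximal rank for each of $C'$ and $D$ in degree $k$ --- which by induction is known --- together with an auxiliary claim that $\Gamma$ imposes independent conditions on the space of degree-$k$ forms vanishing on $C'$ (equivalently, on $D$). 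Upper semicontinuity of $h^0(\mathcal{I}_C(k))$ in flat families then transports the conclusion from $C_0$ back to the general BN-curve~$C$.

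For the induction to close, each specialization must satisfy three conditions. First, both $C'$ and $D$ should be \emph{interior} BN-curves of strictly smaller type, so that the inductive hypothesis applies. Second, the nodal union $C_0$ must be a flat limit of interior BN-curves of type $(d, g)$; equivalently, deformations of $C_0$ must fill out a component of $\bar M_g(\pp^r, d)$ of the expected relative dimension $\rho(d, g, r) + r(r+2)$ that dominates $\bar M_g$. Third, the attachment points $\Gamma$ must be general enough on each component that the residual interpolation claim can be verified, typically by a secondary induction on $(r, k)$ combined with a direct dimension count. The base cases --- quadrics $k = 2$ via \cite{ball2, jp}, nonspecial curves with $d \geq g + r$ via \cite{ballns}, space curves with $r = 3$ via \cite{ball}, and rational curves via \cite{mrat} --- are already in place, so the induction only needs to push into the interior of the parameter space.

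The principal obstacle will be the construction of a decomposition $(d, g) = (d_1, g_1) + (d_2, g_2) + \delta$ (with $\delta$ accounting for the nodes) together with a realization $C_0 = C' \cup D$ that meets all three conditions simultaneously. Naive choices --- peeling off a single line or attaching a rational normal curve --- typically fail to preserve interiority: the resulting $C_0$ lies in an extraneous component of the Kontsevich space that does not dominate $\bar M_g$. The correct specializations will have to be chosen via a case analysis in terms of the residues of $(d, g, r, k)$ modulo a small set of congruences, with sporadic low-degree exceptions handled by hand. The interpolation step is most delicate in the critical regime where the expected dimension $\binom{r+k}{k} - (kd + 1 - g)$ is close to zero, since there the Mayer--Vietoris bound allows no slack; this is where the bulk of the technical work, and the nested induction on $(r, k)$, is expected to concentrate.
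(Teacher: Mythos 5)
Your outline is not a proof, and the gap sits exactly at its core step. The Mayer--Vietoris sequence does not reduce maximal rank for $C_0 = C' \cup D$ to maximal rank for $C'$ and $D$ separately. For the surjectivity direction (vanishing of $H^1(\ii_{C_0 \subset \pp^r}(k))$) the long exact sequence requires, beyond $H^1(\ii_{C'}(k)) = H^1(\ii_D(k)) = 0$, that the difference map
\[H^0(\ii_{C'}(k)) \oplus H^0(\ii_{D}(k)) \to H^0(\ii_{\Gamma}(k))\]
be surjective. This is not an ``auxiliary claim'': in the critical range where $\binom{r+k}{k} - (kd+1-g)$ is near zero it is precisely as strong as the statement being proved, and it is not implied by any maximal-rank statement for the individual components. (As written, your auxiliary claim --- that $\Gamma$ imposes independent conditions on the degree-$k$ forms vanishing on $C'$ --- does not even parse, since $\Gamma \subset C'$ and every such form already vanishes on $\Gamma$; what is actually needed is a statement about the image of $H^0(\ii_{C'}(k))$ in $H^0(\oo_D(k))$.) Deferring this to ``a secondary induction on $(r,k)$ combined with a direct dimension count'' is deferring the entire theorem.

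The paper's mechanism is genuinely different. Rather than splitting $C$ into two curves in $\pp^r$ and applying Mayer--Vietoris, it degenerates part of the configuration into a hyperplane $H$ and uses the restriction sequence $0 \to \ii_{B \subset \pp^r}(k-1) \to \ii_{T^\circ \subset \pp^r}(k) \to \ii_{T^\circ \cap H \subset H}(k) \to 0$, so the residual piece $B$ is measured against polynomials of degree $k-1$ and the piece in $H$ against polynomials on a $\pp^{r-1}$; the induction is on $(r,k)$ with base cases $r = 3$ and $k = 2$. The price of this method --- and the key idea your outline has no analogue of --- is that the points where the transverse part meets $H$ are \emph{not} general points of $H$: they are constrained to be (subsets of) hyperplane sections of BN-curves. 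To make the induction close, the statement must be strengthened to Theorem~\ref{main}, which allows the configuration to include hyperplane sections of auxiliary BN-curves $D_i \subset \pp^{r+1}$ together with general points; the existence of splittings $(d', g', h', n, \ldots)$ making the numerology work is then reduced to explicit systems of inequalities verified by computer. For the same reason, your appeal to the known base cases is insufficient as stated: the literature establishes only the plain Conjecture~\ref{c:mrc} in those cases, whereas the induction needs the strengthened statement, which is why the paper reproves the $r = 3$ and $k = 2$ cases in that enhanced form.
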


The proof of this conjecture spans multiple papers, of which the present paper
is the final.
A detailed overview of the entire series of papers is given in the research announcement \cite{over},
which we highly encourage the reader to consult.
While this research announcement is purely expository
and thus not logically necessary for the present paper,
it includes a detailed
discussion of the key ideas that will introduced in the present paper and
the motivation behind them --- a discussion omitted from the present paper.

Our argument will be by induction on a stronger inductive hypothesis.
More generally, we say a subscheme $T \subset \pp^r$ \emph{satisfies maximal
rank for polynomials of degree $k$} if the restriction map
\[H^0(\oo_{\pp^r}(k)) \to H^0(\oo_T(k))\]
is of maximal rank. Since $H^1(\oo_{\pp^r}(k)) = 0$, the long exact
sequence in cohomology attached to the short exact sequence of sheaves
\[0 \to \ii_{T \subset \pp^r} (k) \to \oo_{\pp^r}(k) \to \oo_T(k) \to 0\]
implies that $T \subset \pp^r$ satisfies maximal rank for polynomials of degree $k$
if and only if
\[H^0(\ii_{T \subset \pp^r}(k)) = 0 \tor H^1(\ii_{T \subset \pp^r}(k)) = 0;\]
the vanishing of $H^0(\ii_{T \subset \pp^r}(k))$ being equivalent to the injectivity
of the restriction map, and the vanishing of $H^1(\ii_{T \subset \pp^r}(k))$ being equivalent
to the surjectivity.
In particular, the condition of satisfying maximal rank is open,
and can therefore be approached via degeneration.

Our stronger inductive hypothesis will be as follows.
(Note that taking $n = \epsilon = 0$
recovers the Maximal Rank Conjecture, so this proves Theorem~\ref{thm:mrc} as desired.)

\begin{thm} \label{main}
Fix an inclusion $\pp^r \subset \pp^{r + 1}$ (for $r \geq 3$), and let $k$ be a positive integer.
Let $C \subset \pp^r$ be a general BN-curve or a general degenerate rational curve
of degree $0 < d < r$.
Let $D_1, D_2, \ldots, D_n \subset \pp^{r + 1}$ be independently
general BN-curves, which are required to be nonspecial if $k = 2$ and $r \geq 4$.
Let $p_1, p_2, \ldots, p_\epsilon \in \pp^r$
be a general set of points.
Then any subset of
\[T \colonequals C \cup ((D_1 \cup D_2 \cup \cdots \cup D_n) \cap \pp^r) \cup \{p_1, p_2, \ldots, p_\epsilon\} \subset \pp^r\]
which contains $C$
satisfies maximal rank for polynomials of degree $k$.
\end{thm}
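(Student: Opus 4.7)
The plan is to prove Theorem~\ref{main} by a multi-parameter induction, principally on $(r, k)$, relying on the openness of maximal rank to pass through degenerations. The workhorse is the \emph{Horace method}: fix a hyperplane $H \cong \pp^{r-1} \subset \pp^r$ and consider the trace--residue exact sequence
\[0 \to \ii_{\mathrm{Res}_H T \subset \pp^r}(k-1) \to \ii_{T \subset \pp^r}(k) \to \ii_{T \cap H \subset H}(k) \to 0.\]
To get $H^0 = 0$ or $H^1 = 0$ of the middle term, it suffices to obtain the same vanishing for both outer terms; this reduces maximal rank at $(r, k)$ to (i) maximal rank at $(r - 1, k)$ for the trace $T \cap H \subset H$, and (ii) maximal rank at $(r, k - 1)$ for the residue $\mathrm{Res}_H T \subset \pp^r$. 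The inductive success hinges on arranging a specialization of $T$ under which both of these configurations again have the precise shape required in Theorem~\ref{main}, with strictly smaller inductive parameters.

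At each step I would specialize either $C$ or one of the $D_i$ so that the data splits cleanly across $H$. The asymmetry between ``curves in $\pp^r$'' (contributing $C$) and ``curves in $\pp^{r+1}$ cut with $\pp^r$'' (contributing the $D_i \cap \pp^r$) in the statement is what makes this machinery close up: specializing a $D_i \subset \pp^{r + 1}$ so that one component lies in $\pp^r$ while the rest lies in a generic hyperplane of $\pp^{r+1}$ transfers configuration points from being ``on a BN-curve in $\pp^{r+1}$'' to ``on a BN-curve in $\pp^r$'', exactly matching the $C$ slot; and a symmetric move in $\pp^r$ itself sends part of $C$ into $H$ while leaving the remainder meeting $H$ transversely, producing for the trace a configuration fitting the hypothesis with $(r - 1)$ in place of $r$. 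The auxiliary free points $p_j$ absorb the numerical slack these transfers create, so the parameters $n$ and $\epsilon$ in the statement are precisely the handles that let the induction propagate rather than stall at the Maximal Rank Conjecture itself.

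The principal obstacle will be \emph{numerical compatibility}. For each regime of $(k, d, g, r, n, \epsilon)$ one must exhibit a specialization under which the trace and residue simultaneously fall under the inductive hypothesis \emph{and} predict mutually compatible directions of maximal rank --- if one outer term is predicted to have nonzero $H^0$ while the other has nonzero $H^1$, the exact sequence yields no information. This pins down the allowed number of $D_i$ to transfer across $H$ and the number of auxiliary points to place on $H$ to an essentially unique choice, which must then be checked to produce configurations that remain Brill--Noether general and interior. A sparse collection of edge cases (most notably $k = 2$ with $r \geq 4$, where the extra hypothesis that the $D_i$ be nonspecial reflects the known delicacy of the quadrics problem) will require separate treatment; these, together with the base cases for small $k$ and $r$, are the pieces supplied by earlier papers in the series, and the present work performs the inductive step gluing them together through the trace--residue sequence.
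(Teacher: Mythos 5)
Your outline correctly identifies the paper's strategy: induction on $(r,k)$ via the trace--residue sequence with respect to a hyperplane, specializations of $C$ and the $D_i$ that split the configuration across $H$ so that trace and residue again fit the inductive hypothesis, and the role of the $D_i$ and the free points as the handles that let the induction close. But two substantive pieces are missing. First, you worry about the trace and residue ``predicting mutually compatible directions of maximal rank,'' and you are right that the sequence is useless when one outer term wants $H^0=0$ and the other $H^1=0$ --- but you do not resolve this. The paper's resolution is to eliminate the surjectivity direction almost entirely: since adding a general point to a configuration with $kd+1-g+h<\binom{r+k}{k}$ does not change whether it has maximal rank, one may pad with points until $\chi(\oo_T(k))\geq\dim H^0(\oo_{\pp^r}(k))$, after which \emph{only} injectivity ever needs to be proven, and the degeneration is arranged (this is the content of the inequalities $I(r,k,d,g,d',g',h,h')$) so that both the trace and the residue also satisfy $\chi\geq\dim H^0$ of their ambient spaces, hence both reduce to injectivity statements. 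Without this reduction your induction does not have a well-posed target at each step.

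Second, the ``numerical compatibility'' you defer is not a bookkeeping formality pinned down to ``an essentially unique choice'': it is the bulk of the proof. One must (a) actually construct the specializations --- e.g.\ degenerate $C$ to $C'\cup C''$ with $C''$ a BN-curve in $H$ and $C'$ a union of a rational curve and lines, or a BN-curve meeting $H$ in general points --- and prove these degenerate curves are still \emph{interior} BN-curves (this requires the smoothing and interpolation results of the earlier papers in the series, encoded here in the systems $X$, $Y$, $Z$); (b) control where the hyperplane sections $D_i\cap\pp^r$ can be placed relative to $H$ (the systems $J$, $K$, $L$, $M$, $N$); and (c) verify that for every $(r,k,d,g,h)$ in the relevant range, integers $d',g',h',n$ (and sometimes $m$) exist satisfying one of several such systems simultaneously with $I(\cdots)$. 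The paper carries out (c) by a computer search over parametrized families of linear inequalities, taking on the order of days; there is no a priori reason the systems are simultaneously solvable, and indeed the ``extreme degree'' regimes where they are not must be handled by a separate direct argument. Your proposal would need to supply all of this --- or at least an argument that it can be done --- before it constitutes a proof.
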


\paragraph{Notation:} In the proof of Theorem~\ref{main}, we write $d$ and $g$ for the
degree and genus of $C$, and $d_i$ and $g_i$ for the degrees and genera of the $D_i$.
We also define
\[h \colonequals \epsilon + \sum_{i = 1}^n d_i.\]

As explained above, our argument will be by induction on $r$ and $k$; we shall reduce
Theorem~\ref{main} for $(r, k)$ inductively to
Theorem~\ref{main} for $(r - 1, k)$ and $(r, k - 1)$.
But first, for fixed $(r, k)$, we may make the following reductions:
\begin{enumerate}
\item \label{T}
Applying the uniform position principle
and Lemma~2.5 of \cite{hyp}, it suffices to prove that $T$ satisfies
maximal rank in Theorem~\ref{main}.
(Theorem~\ref{main} is stated for any subset of $T$ containing 
$C$
only because that is a more convenient inductive hypothesis.)

\item \label{eeq0}
If $kd + 1 - g + h < \binom{r + k}{k}$, then $T$ satisfies maximal
rank for polynomials of degree $k$
if and only if $T \cup \{p\}$ does, where $p \in \pp^r$ is a general point.
We may therefore increase $\epsilon$ (and therefore $h$) until
$kd + 1 - g + h \geq \binom{r + k}{k}$.

Alternatively, in any case, we may remove general points until $\epsilon = 0$.

We therefore suppose
\begin{equation} \label{epsgeq4}
kd + 1 - g + h \geq \binom{r + k}{k} \quad \text{if} \quad r \geq 4,
\end{equation}
and that
\begin{equation} \label{eps3}
\epsilon = 0 \quad \text{if} \quad r = 3.
\end{equation}
\end{enumerate}

\paragraph{Organization of the remainder of the paper:} 
We begin, in Section~\ref{sec:degenerations}, by studying the key degenerations 
that we shall use in our inductive argument.
Here we leverage a special case of the result, originally due to Ballico and Ellia \cite{linesgeq4},
that the union
of a general rational curve and general disjoint lines satisfies maximal rank for polynomials of any degree;
however, to make this paper more self-contained, we give an independent proof
of the special case of this fact that we shall use in Appendix~\ref{app:lines}.

Then in Sections~\ref{sec:space} and~\ref{sec:quadrics},
we establish Theorem~\ref{main} for space curves ($r = 3$)
and quadrics ($k = 2$).
For space curves, we leverage the result, originally due to Ballico and Ellia \cite{ball},
that establishes Theorem~\ref{thm:mrc} in this case;
however, to make this paper more self-contained, we give an independent proof
of this result in Appendix~\ref{app:space}.
These cases will form the base cases for our larger inductive argument.

In Section~\ref{extp}
we study the ``easy'' cases of Theorem~\ref{main}, where the restriction map
is far from being an isomorphism (i.e.\ either the dimension of the source
is much larger than the dimension of the target, or vice versa).
Intuitively, these are the easier cases
since the codimension in the space of all linear maps of those with non-maximal rank
is highest when the dimensions are far apart.

Finally, in Section~\ref{sec:inductive},
we use the machinery of the preceding
sections to reduce Theorem~\ref{main}
to a computation involving the existence of integers satisfying
certain systems of inequalities.
This computation is taken care of in Appendices~\ref{app:inequalities},
\ref{app:comp}, and~\ref{app:covers}.

\subsection*{Acknowledgements}

The author would like to thank Joe Harris for
his guidance throughout this research,
as well as Atanas Atanasov, Edoardo Ballico, Brian Osserman, Sam Payne, Ravi Vakil, Isabel Vogt, David Yang, and other members of the Harvard and MIT mathematics departments,
for helpful conversations or comments on this manuscript.
The author would also like
to acknowledge the generous
support both of the Fannie and John Hertz Foundation,
and of the Department of Defense
(DoD) through the National  Defense Science and Engineering Graduate Fellowship (NDSEG) Program.

\section{Degenerations \label{sec:degenerations}}

In this section, we outline the degenerations we shall use in the proof
of Theorem~\ref{main}.

\begin{defi}
We say that $r$, $k$, $d$, $g$, $d'$, $g'$, $h$, and $h'$
\emph{satisfy $I(r, k, d, g, d', g', h, h')$} if
\begin{align}
(k - 1) d' + 1 - g' + h' &\geq \frac{k}{r + k} \cdot \binom{r + k}{k} = \binom{r + k - 1}{k - 1} \label{upstairs} \\
kd - g - (k - 1)d' + g' + h - h' &\geq \frac{r}{r + k} \cdot \binom{r + k}{k} = \binom{r + k - 1}{k}. \label{downstairs}
\end{align}
\end{defi}

\begin{prop} \label{hir}
Let $r \geq 4$ and $H \subset \pp^r$ be a hyperplane.
Suppose that Theorem~\ref{main}
holds for $(r - 1, k)$ and $(r, k - 1)$.
Assume there exists:
\begin{itemize}
\item A specialization of $C$ to an interior BN-curve $C^\circ = C' \cup C''$,
with $C''$ contained in $H$ and $C'$ transverse to $H$,
with $C' \cap C''$ general, and with $C'$ of degree $d'$ and genus $g'$
(by convention $\emptyset$ is of degree $0$ and genus $1$).
\item Specializations $\{p_1^\circ, p_2^\circ, \ldots, p_\epsilon^\circ\}$
of $\{p_1, p_2, \ldots, p_\epsilon\}$,
and $D_i^\circ$ of each $D_i$,
to disjoint sets of distinct points with
\[\#(\{p_1^\circ, p_2^\circ, \ldots, p_\epsilon^\circ\} \cap H) + \sum \# (D_i^\circ \cap H) = h - h'.\]
\end{itemize}
Assume also that for some deformation $\tilde{C'}$ of $C'$,
\begin{gather*}
A \colonequals \big(C^\circ \cup D_1^\circ \cup D_2^\circ \cup \cdots \cup D_n^\circ \cup \{p_1^\circ, \ldots, p_\epsilon^\circ\}\big) \cap H \subset H \quad \text{and} \\
B \colonequals \tilde{C'} \cup \big(\big(D_1^\circ \cup D_2^\circ \cup \cdots \cup D_n^\circ \cup \{p_1^\circ, \ldots, p_\epsilon^\circ\}\big) \cap (\pp^r \smallsetminus H)\big) \subset \pp^r
\end{gather*}
each either:
\begin{enumerate}
\item Satisfy the assumptions of Theorem~\ref{main},
with polynomials of degree $k$ for $A$, or with polynomials of degree $k - 1$ for $B$;
\item Are the union
of subsets of hyperplane sections of general BN-curves
which are nonspecial if $k = 2$ for $A$, or if $k = 3$ for $B$; or
\item Are the union of a general rational curve and at most $k - 1$ for $A$, or $k - 2$ for $B$,
independently general lines,
together with some number of independently general points.
\end{enumerate}
If $r$, $k$, $d$, $g$, $d'$, $g'$, $h$, and $h'$ satisfy
$I(r, k, d, g, d', g', h, h')$,
then Theorem~\ref{main} holds for $T$.
\end{prop}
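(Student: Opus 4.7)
My plan is to prove the proposition by specializing $T$ flatly to a scheme $T^\star \subset \pp^r$ to which a Castelnuovo (trace/residual) exact sequence for the hyperplane $H$ applies cleanly, and then combining the hypothesized maximal-rank properties of $A$ and $B$ through the associated long exact sequence. The inequalities in $I(r,k,d,g,d',g',h,h')$ will force both of the resulting auxiliary restriction maps to be injective, and the long exact sequence will deliver the desired conclusion.

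First, I construct
\[
T^\star \colonequals \tilde{C'} \cup C'' \cup D_1^\circ \cup \cdots \cup D_n^\circ \cup \{p_1^\circ, \ldots, p_\epsilon^\circ\},
\]
in which $\tilde{C'}$ is attached to $C''$ in $d'$ general points of $C''$ (which lie on $H$). The curve part $\tilde{C'} \cup C''$ is then a nodal curve of degree $d' + (d - d') = d$ and arithmetic genus $g' + (g - g' - d' + 1) + d' - 1 = g$, i.e.\ a valid boundary point of the moduli of nodal BN-curves of type $(d,g)$. The given specialization $C \rightsquigarrow C' \cup C''$ can be deformed, within this boundary component, to the specialization $C \rightsquigarrow \tilde{C'} \cup C''$ by moving $C'$ within its own BN moduli while keeping the $d'$ attachment points on $C''$; thus $T^\star$ is a flat limit of $T$. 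By upper semicontinuity of $h^0$ of twisted ideal sheaves, it suffices to show $T^\star$ satisfies maximal rank for polynomials of degree $k$.

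Next, I apply the Castelnuovo exact sequence
\[
0 \to \ii_{B}(k-1) \to \ii_{T^\star}(k) \to \ii_{A \subset H}(k) \to 0.
\]
Because $\tilde{C'}$ is transverse to $H$, $C''$ is contained in $H$, and the specialized $D_i^\circ, p_j^\circ$ lie either entirely inside $H$ or entirely outside $H$, one checks component-by-component that the residual of $T^\star$ with respect to $H$ is exactly $B$, while the scheme-theoretic trace $T^\star \cap H$ is exactly $A$; the Euler-characteristic check $\chi(\oo_{T^\star}(k)) = \chi(\oo_B(k-1)) + \chi(\oo_A(k))$ confirms the accounting. By the hypotheses of the proposition, $A \subset H \cong \pp^{r-1}$ satisfies maximal rank for polynomials of degree $k$ (either by Theorem~\ref{main} at $(r-1, k)$, by the rational-curve-plus-lines case, or by the nonspecial hyperplane-section case), and $B \subset \pp^r$ satisfies maximal rank for polynomials of degree $k-1$ (symmetrically). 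The inequality \eqref{upstairs} translates into $\chi(\oo_B(k-1)) \geq h^0(\oo_{\pp^r}(k-1))$ and \eqref{downstairs} into $\chi(\oo_A(k)) \geq h^0(\oo_H(k))$; in each case the target of the relevant restriction map is at least as large as the source, so the maximal-rank conclusion forces injectivity: $H^0(\ii_B(k-1)) = 0$ and $H^0(\ii_{A \subset H}(k)) = 0$. Plugging these into the long exact sequence yields $H^0(\ii_{T^\star}(k)) = 0$, and by the opening semicontinuity, $H^0(\ii_T(k)) = 0$ as required.

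The principal obstacle I anticipate is the flat-limit construction in the first step: the hypothesis only produces a specialization $C \rightsquigarrow C' \cup C''$ for a particular $C'$, whereas my argument needs the specialization $C \rightsquigarrow \tilde{C'} \cup C''$ for an arbitrary deformation $\tilde{C'}$, and this requires a careful invocation of the irreducibility of the relevant boundary component of $\bar{M}_g(\pp^r, d)$ so that deforming $C'$ to $\tilde{C'}$ stays inside a single component. A secondary, more routine, obstacle is to confirm the identifications $\chi(\oo_A(k)) = h^0(\oo_A(k))$ and $\chi(\oo_B(k-1)) = h^0(\oo_B(k-1))$ needed to extract the correct direction of maximal rank from the inequalities; this reduces to a nonspeciality check for the relevant line bundles, to be carried out case-by-case according to the three hypothesized shapes of $A$ and $B$ in the statement.
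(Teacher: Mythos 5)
Your proposal follows essentially the same route as the paper's proof: specialize $C$ to $\tilde{C'} \cup C''$ (with the ``interior BN-curve'' hypothesis supplying exactly the single-component argument you flag as the main obstacle), apply the Castelnuovo exact sequence with respect to $H$, and use the two inequalities of $I(r,k,d,g,d',g',h,h')$ to convert the maximal-rank hypotheses on $A$ and $B$ into the vanishings $H^0(\ii_{B}(k-1)) = H^0(\ii_{A \subset H}(k)) = 0$. The only imprecisions are minor and harmless: the number of attachment points of $\tilde{C'}$ to $C''$ need not be $d'$; the trace $T^\star \cap H$ involves $\tilde{C'} \cap H$ rather than $C' \cap H$ and so only \emph{specializes} to $A$ (which suffices by semicontinuity); and your ``secondary obstacle'' is vacuous, since only the inequality $h^0 \geq \chi$ is needed, which holds automatically.
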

\begin{proof}
Since $C' \cap C''$ is general in $H$, and $C'$ is transverse to $H$,
we may arrange for $\tilde{C'}$ to pass through $C' \cap C''$,
so that $\tilde{C'} \cup C''$ is a deformation of $C' \cup C''$.
Since $C' \cup C''$ is an interior BN-curve,
we conclude that $\tilde{C'} \cup C''$ is also a specialization of $C$.

Write $T^\circ = \tilde{C'} \cup C'' \cup ((D_1^\circ \cup D_2^\circ \cup \cdots \cup D_n^\circ) \cap \pp^r) \cup \{p_1^\circ, p_2^\circ, \ldots, p_\epsilon^\circ\}$.
Then we have an exact sequence
\[0 \to \ii_{B \subset \pp^r}(k - 1) \to \ii_{T^\circ \subset \pp^r}(k) \to \ii_{T^\circ \cap H \subset H}(k) \to 0.\]
Thus, to show $H^0(\ii_{T^\circ \subset \pp^r}(k)) = 0$,
it suffices to show $H^0(\ii_{B \subset \pp^r}(k - 1)) = H^0(\ii_{T^\circ \cap H \subset H}(k)) = 0$.
Since $T^\circ \cap H$ can be specialized to $A$, it suffices to show
$H^0(\ii_{B \subset \pp^r}(k - 1)) = H^0(\ii_{A\subset H}(k)) = 0$,
or equivalently that
the restriction maps
\[H^0(\oo_H(k)) \to H^0(\oo_A(k)) \tand H^0(\oo_{\pp^r}(k - 1)) \to H^0(\oo_B(k - 1))\]
are both injective.
The condition
$I(r, k, d, g, d', g', h, h')$
implies
\begin{align*}
\chi(\oo_B(k - 1)) = (k - 1)d' + 1 - g' + h' &\geq \frac{k}{r + k} \cdot \binom{r + k}{k} = \dim H^0(\oo_{\pp^r}(k - 1)) \\
\text{and} \quad \chi(\oo_A(k)) = kd - g - (k - 1)d' + g' + h - h' &\geq \frac{r}{r + k} \cdot \binom{r + k}{k} = \dim H^0(\oo_H(k)),
\end{align*}
and so
\begin{gather*}
\dim H^0(\oo_B(k - 1)) \geq \chi(\oo_B(k - 1)) \geq \dim H^0(\oo_{\pp^r}(k - 1)) \\
\dim H^0(\oo_A(k)) \geq \chi(\oo_A(k)) \geq \dim H^0(\oo_H(k)).
\end{gather*}

It thus remains to show that
$A \subset H$, respectively $B \subset \pp^r$,
satisfies maximal rank for polynomials of degree $k$, respectively of degree $k - 1$.
This holds by our inductive hypothesis for Theorem~\ref{main},
or Theorem~1.3 of~\cite{hyp} --- unless $A \subset H$, respectively $B \subset \pp^r$,
is the union of a general rational curve and $k - 1$ or fewer, respectively $k - 2$ or fewer,
independently general lines,
together with some number of independently general points.

Since the union of a subscheme satisfying maximal rank for polynomials
of some degree with a general point
still satisfies maximal rank for polynomials of that degree,
it thus remains to show that the union of a general rational curve and $k - 1$
or fewer independently general lines in $\pp^r$ satisfies maximal rank for polynomials of degree $k$.
But for $r = 3$, this holds by a result of Hartshorne and Hirschowitz
\cite{lines3}.
And for $r \geq 4$, this is a special case of a theorem of Ballico and Ellia \cite{linesgeq4}; however, this special case is much easier to prove than
the general case, so to make this paper more self-contained we give a proof
of this special case in Appendix~\ref{app:lines}.
\end{proof}

\begin{defi}
We say that $r$, $h$, and $h'$
\emph{satisfy $A(r, h, h')$} if
\[0 \leq h' \leq \frac{h}{r + 1}.\]
\end{defi}

\begin{prop} \label{prop:crack}
For some integer $h'$ satisfying $A(r, h, h')$,
there exist
specializations
$D_i^\circ$ of each $D_i$, with
\[(D_1^\circ \cup D_2^\circ \cup \cdots \cup D_n^\circ) \cap (\pp^r \smallsetminus H) \subset \pp^r\]
a general set of $h'$ points, and
\[D_1^\circ \cap H, D_2^\circ \cap H, \ldots, D_n^\circ \cap H \subset H\]
a set of hyperplane sections of independently general
BN-curves.
\end{prop}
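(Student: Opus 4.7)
The plan is to construct each specialization $D_i \rightsquigarrow D_i^\circ$ independently, arranging for $D_i^\circ$ to break off a large planar component inside $\pp^r$. Specifically, for each $i$ I would choose an integer $0 \leq e_i \leq d_i$ subject to two requirements: the numerical condition $\sum_i e_i \geq \frac{r}{r+1} h$ (equivalently $h' := h - \sum_i e_i \leq h/(r+1)$), and the existence of a general BN-curve of degree $e_i$ in $\pp^r$ of some genus $\tilde{g}_i$ that is compatible, via an appropriate residual genus and node count, with the arithmetic genus $g_i$ of $D_i$. The extremal choice $e_i = d_i$ (giving $h' = 0$) is available whenever $\rho(d_i, g_i, r) \geq 0$; in the remaining cases one takes $e_i$ as large as these constraints permit.

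Next, for each $i$ I would specialize $D_i$ to the nodal curve $D_i^\circ := \tilde{E}_i \cup R_i \subset \pp^{r+1}$, where $\tilde{E}_i \subset \pp^r$ is an independently general BN-curve in $\pp^r$ of degree $e_i$ and genus $\tilde{g}_i$, and $R_i \subset \pp^{r+1}$ is an independently general residual BN-curve of degree $d_i - e_i$ meeting $\tilde{E}_i$ at nodes, transverse to $\pp^r$, and having its remaining $d_i - e_i$ intersections with $\pp^r$ fall at independently general points of $\pp^r \smallsetminus H$. Since $R_i$ is a $1$-dimensional curve transverse to $\pp^r$, it generically misses the codimension-$2$ subspace $H \subset \pp^{r+1}$, so $D_i^\circ \cap H = \tilde{E}_i \cap H$ is exactly the hyperplane section in $\pp^r$ of the independently general BN-curve $\tilde{E}_i$ --- which is the desired structure, since in the recursive application of Theorem~\ref{main} to $(r-1, k)$, the ``upstairs'' space $\pp^r$ plays the role of the new ambient projective space and $H = \pp^{r-1}$ the role of the distinguished hyperplane. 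Simultaneously, $R_i \cap (\pp^r \smallsetminus H)$ contributes $d_i - e_i$ independently general points, giving in total a general set of $h' = \sum_i (d_i - e_i)$ points in $\pp^r \smallsetminus H$.

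The main obstacle is the smoothing step: justifying that the reducible nodal union $\tilde{E}_i \cup R_i$ lies in the closure of the interior Brill--Noether component in the Hilbert scheme of curves in $\pp^{r+1}$, so that it is genuinely a flat degeneration of a general $D_i$. This requires a smoothing argument for BN-curves meeting nodally with specified incidence conditions (e.g., via limit linear series or by invoking results from the preceding papers in this series), combined with a compatible choice of numerical data $(e_i, \tilde{g}_i, r_i, \#\text{nodes})$ satisfying simultaneously the arithmetic-genus identity $\tilde{g}_i + r_i + \#\text{nodes} - 1 = g_i$, the Brill--Noether inequalities in $\pp^r$ for $\tilde{E}_i$ and in $\pp^{r+1}$ for $R_i$, and the global constraint $\sum_i e_i \geq \frac{r}{r+1} h$. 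The delicate cases are those in which $D_i$ is barely a BN-curve in $\pp^{r+1}$ (so $\rho(d_i, g_i, r+1)$ is small), since then $e_i$ cannot be taken too large without violating the existence of $\tilde{E}_i$ in $\pp^r$, and one must verify by a direct numerical check that a workable choice always exists within the bound $h' \leq h/(r+1)$.
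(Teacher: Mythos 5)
Your overall strategy --- peeling off from each $D_i$ a large component lying in a hyperplane, so that the finite set $D_i \cap \pp^r$ degenerates into the hyperplane section of a general BN-curve inside $H$ plus a small residual set of general points --- is the right one, and is essentially what the cited Lemmas~5.3 and~6.1 of \cite{hyp} accomplish (the paper's own proof is nothing but that citation). But there is a concrete error in your geometric setup: you place the component $\tilde E_i$ inside $\pp^r$ itself, i.e.\ inside the very hyperplane whose intersection with $D_i$ is being tracked. With that choice $D_i^\circ \cap \pp^r$ contains the entire curve $\tilde E_i$, so $(D_1^\circ \cup \cdots \cup D_n^\circ) \cap (\pp^r \smallsetminus H)$ contains the one-dimensional sets $\tilde E_i \smallsetminus H$ and is not ``a general set of $h'$ points''; the conclusion of the proposition, and its subsequent use in Proposition~\ref{hir} (where the $D_i^\circ$ must specialize to disjoint \emph{finite} sets of points), both fail. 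Even if one reinterprets your construction via the flat limit of $D_{i,t} \cap \pp^r$ as $D_{i,t} \to \tilde E_i \cup R_i$, the part of that limit supported on $\tilde E_i$ is a degree-$e_i$ divisor depending on the first-order data of the degeneration, and you give no argument that it can be arranged to equal $\tilde E_i \cap H$.

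The fix is to place $\tilde E_i$ not in $\pp^r$ but in a hyperplane $H' \subset \pp^{r+1}$ transverse to $\pp^r$ with $H' \cap \pp^r = H$ --- exactly the configuration used in Section~\ref{sec:quadrics}, where that hyperplane is called $H$ and its trace on $\pp^r$ is called $\Lambda$. Then $\tilde E_i \cap \pp^r = \tilde E_i \cap H$ is literally the hyperplane section of the BN-curve $\tilde E_i \subset H' \simeq \pp^r$, no limit computation is needed, and $D_i^\circ \cap \pp^r$ remains finite with the residual points $R_i \cap \pp^r$ free to be made general off $H$. With that correction your outline matches the intended argument; what remains --- and what you correctly flag as the main obstacle but do not carry out --- is (i) the smoothing statement that $\tilde E_i \cup R_i$ lies in the closure of the Brill--Noether component containing $D_i$, and (ii) the numerology showing $\deg R_i$ can always be taken small enough that $h' \le h/(r+1)$, including when $\rho(d_i, g_i, r+1)$ is small. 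These are precisely the content of the lemmas of \cite{hyp} that the paper invokes, so even after the correction your proposal establishes only the easy part of the proposition.
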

\begin{proof}
Since $\sum \deg D_i \leq h$, this follows from combining Lemmas~5.3 and~6.1 of~\cite{hyp}.
\end{proof}

\begin{defi}
We say that $r$, $h$, and $h'$ \emph{satisfy $J(r, h, h')$} if
\[r + \left \lfloor \frac{h}{r + 1} \right \rfloor \leq h' \leq h,\]
\emph{satisfy $K(r, h, h')$} if
\[h \leq 2r + 1 \tand 0 \leq h' \leq h,\]
\emph{satisfy $L(r, h, h')$} if
\[2 \leq h \leq 3r + 2 \tand h' = 2,\]
\emph{satisfy $M(r, h, h')$} if
\[h \leq 3r + 2 \tand r + 2 \leq h' \leq h - \left\lfloor \frac{r}{2} \right\rfloor,\]
and \emph{satisfy $N(r, h, h')$} if
\[h \geq 0 \tand r + \left \lfloor \frac{h}{r + 1} \right \rfloor \leq h' \leq h - r \cdot \left\lfloor \frac{h}{2r + 2}\right\rfloor.\]
\end{defi}

\begin{prop} \label{prop:hyp}
There exist specializations $\{p_1^\circ, p_2^\circ, \ldots, p_\epsilon^\circ\}$
of $\{p_1, p_2, \ldots, p_\epsilon\}$,
and $D_i^\circ$ of each $D_i$,
with
\[\#(\{p_1^\circ, p_2^\circ, \ldots, p_\epsilon^\circ\} \cap H) + \sum \# (D_i^\circ \cap H) = h - h',\]
so that
\begin{gather*}
\{p_1^\circ, p_2^\circ, \ldots, p_\epsilon^\circ\} \cap H \subset H \quad \text{and} \quad \\
\{p_1^\circ, p_2^\circ, \ldots, p_\epsilon^\circ\} \cap \pp^r \smallsetminus H \subset \pp^r
\end{gather*}
are sets of general points, and
\begin{gather*}
D_1^\circ \cap H, D_2^\circ \cap H, \ldots, D_n^\circ \cap H \subset H \quad \text{and} \\
D_1^\circ \cap \pp^r \smallsetminus H, D_2^\circ \cap \pp^r \smallsetminus H, \ldots, D_n^\circ \cap \pp^r \smallsetminus H \subset \pp^r
\end{gather*}
are sets of subsets of hyperplane sections of independently general
BN-curves, provided that
$r$, $h$, and $h'$ satisfy $J(r, h, h')$.

Moreover, such specializations exist so that
the second of these sets can be specialized to a set of subsets
of hyperplane sections of independently general nonspecial BN-curves
if $r$, $h$, and $h'$ satisfy one of $K(r, h, h')$, $L(r, h, h')$, $M(r, h, h')$,
or $N(r, h, h')$.
\end{prop}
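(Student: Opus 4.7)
The plan is to assemble the specialization by deciding, for each curve $D_i$ and each point $p_j$ individually, whether it lands in $H$ or in $\pp^r \smallsetminus H$, and then realizing each half of the configuration using Proposition~\ref{prop:crack} together with Lemmas~5.3 and~6.1 of~\cite{hyp}. The argument will split by case analysis on which of the conditions $J, K, L, M, N$ is assumed.

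For the first claim, assuming $J$, I would select a subset of the $D_i$'s of appropriate total degree and keep them entirely away from $H$. Because these $D_i$'s remain general BN-curves in $\pp^{r + 1}$, their intersections with $\pp^r$ are hyperplane sections of BN-curves (namely themselves), so the ``subset of a hyperplane section'' condition for $D_i^\circ \cap (\pp^r \smallsetminus H)$ is immediate. The remaining $D_i$'s would then be specialized via Proposition~\ref{prop:crack} so that all of their intersection with $\pp^r$ lies in $H$ as hyperplane sections of general BN-curves in $\pp^r$. The points $p_j$ are distributed freely between the two sides to tweak the totals to exactly $h - h'$ inside $H$ and $h'$ outside. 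The inequality $h' \geq r + \lfloor h/(r + 1) \rfloor$ from $J$ is precisely what provides enough ``outside budget'' to absorb both the fully-outside $D_i$'s and any overflow beyond Proposition~\ref{prop:crack}'s ceiling of $h/(r + 1)$.

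The moreover clause for $K, L, M, N$ follows the same scheme, with the additional requirement that the BN-curves used outside $H$ be nonspecial. Nonspeciality of a BN-curve in $\pp^{r + 1}$ of degree $d$ and genus $g$ amounts to the numerical condition $d \geq g + r + 1$, which restricts how the ``outside'' budget can be filled. The four conditions cover disjoint numerical regimes of $(h, h')$ in which an appropriate assembly of nonspecial outside BN-curves --- possibly obtained by further specializing individual $D_i$'s to reducible limits --- is feasible: broadly, $K$ and $L$ cover small or narrowly constrained ranges where direct constructions suffice, while $M$ and $N$ cover more substantial regimes matched to what Section~\ref{sec:inductive} will need.

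I expect the main obstacle to lie in the nonspecial bookkeeping for cases $M$ and $N$, where several nonspecial BN-curves of varying degrees and genera must be combined so that the counts match exactly while every ingredient remains a realizable (nonspecial) BN-curve. The case $J$ should, by contrast, reduce fairly directly to Proposition~\ref{prop:crack} plus elementary arithmetic, and cases $K$ and $L$ should admit essentially ad hoc constructions given how narrow the numerical regime is.
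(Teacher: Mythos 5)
There is a genuine gap in your treatment of case $J$, and it is the heart of the proposition. You propose to assign each $D_i$ \emph{wholesale} either to $H$ (via Proposition~\ref{prop:crack}) or to $\pp^r \smallsetminus H$ (by leaving it alone), and to ``tweak the totals'' with the points $p_j$. But the proposition must produce the split $h - h'$ versus $h'$ for \emph{every} $h'$ in the range prescribed by $J(r,h,h')$, and there may be no points to distribute: the first step of the actual proof reduces to $\epsilon = 0$ (general points can be specialized into either $H$ or $\pp^r$, and this is compatible with the defining inequalities of $J,K,L,M,N$). With $\epsilon = 0$ the only remaining flexibility in your scheme is which curves go where, and since each $D_i$ has degree at least $r+1$, the achievable values of $h'$ are just the subset sums of $\{d_1,\dots,d_n\}$ --- e.g.\ for $n = 1$ you can only reach $h' = 0$ or $h' = h$, while $J$ demands every $h'$ with $r + \lfloor h/(r+1)\rfloor \leq h' \leq h$. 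The essential content of the proposition is precisely that the hyperplane section of a \emph{single} $D_i$ can be split between $H$ and its complement with both pieces remaining subsets of hyperplane sections of general BN-curves; this is supplied by Lemma~5.4 of \cite{hyp} (combined with Lemma~6.1), not by Proposition~\ref{prop:crack}/Lemma~5.3, and your proposal contains no substitute for it. Note also that Proposition~\ref{prop:crack} only asserts the existence of \emph{some} $h'$ with $0 \leq h' \leq h/(r+1)$, so you cannot use it to force all of a chosen curve's intersection into $H$.

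The ``moreover'' clause is also underdeveloped at the point where the actual work happens. The paper first observes that $\rho(d_i,g_i,r+1)\geq 0$ forces any special $D_i$ to have degree at least $2r+2$, so under $K$ all $D_i$ are automatically nonspecial, and under $L$ or $M$ the only nontrivial case is $n = 1$ with $g_1 = d_1 - r$. For $M$ outside $N$ (so $h - h' \leq r-1$), the two key ideas are: (i) uniform position plus transitivity of $\aut\pp^r$ on points in linear general position makes $D_1 \cap H$ general; and (ii) Theorem~1.8 of \cite{rbn} degenerates $D_1$ to $D \cup_\Gamma \pp^1$ with $f|_D$ nonspecial of degree $d_1 - \lfloor r/2\rfloor$, so that the $h' \leq h - \lfloor r/2\rfloor$ points outside $H$ can be placed on $f(D)$. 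Your phrase ``possibly obtained by further specializing individual $D_i$'s to reducible limits'' gestures at (ii) but does not identify the construction or verify that the numerical bounds in $M$ are exactly what make it work.
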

\begin{proof}
Note that:
\begin{itemize}
\item If $h \geq 1$, and $r$, $h$, and $h'$ satisfy $K(r, h, h')$, then they also satisfy
either $K(r, h - 1, h')$ or $K(r, h - 1, h' - 1)$.

\item If $h \geq 2r + 2$, and $r$, $h$, and $h'$ satisfy $J(r, h, h')$, then they also satisfy
either $J(r, h - 1, h')$ or $J(r, h - 1, h' - 1)$.
The same holds with $J$ replaced by $L$, $M$, or $N$.

\item If $h \leq 2r + 1$, and $r$, $h$, and $h'$ satisfy $J(r, h, h')$,
then they also satisfy $K(r, h, h')$.
The same holds with $J$ replaced by $L$, $M$, or $N$.
\end{itemize}
Since general points in $\pp^r$ can be specialized to general points
in either $H$ or $\pp^r$, we may therefore reduce $\epsilon$ until
$\epsilon = 0$.

By combining Lemmas~5.4 and~6.1 of~\cite{hyp}, we see
such a specialization exists if $r$, $h$, and $h'$ satisfy $J(r, h, h')$;
moreover, we can take
the second of these sets to be a set of subsets
of hyperplane sections of independently general nonspecial BN-curves
if $N(r, h, h')$ is satisfied.

It thus remains to consider the case when one of $K(r, h, h')$,
$L(r, h, h')$, or $M(r, h, h')$ is satisfied.
Note that
\[d_i \geq d_i - \rho(d_i, g_i, r + 1) = (r + 1) \cdot (g_i + r + 1 - d_i + 1).\]
In particular, $D_i$ can only be special if its degree is at least $2r + 2$.

If $K(r, h, h')$ is satisfied, we thus conclude all $D_i$ are nonspecial.
Similarly, if $L(r, h, h')$ or $M(r, h, h')$ is satisfied,
then since a BN-curve in $\pp^{r + 1}$ has degree at least $r + 1$,
we thus conclude that either all $D_i$ are nonspecial
or $n = 1$ and $g_1 = d_1 - r$.

Since the 
the desired result when all $D_i$ are nonspecial
follows from Corollary~4.2 of~\cite{hyp},
it remains to consider the case when $L(r, h, h')$ or $M(r, h, h')$ is satisfied,
$n = 1$, and $g_1 = d_1 - r$.

If $L(r, h, h')$ is satisfied, the result now follows from Lemma~5.3 of \cite{hyp}.
It thus remains to consider the case when $r$, $h$, and $h'$ satisfy $M(r, h, h')$ and not $N(r, h, h')$,
i.e.\ when
\[h \leq 3r + 2 \tand h - r + 1 \leq h' \leq h - \left\lfloor \frac{r}{2} \right \rfloor.\]

By the uniform position principle, the points of $D_1 \cap \pp^r$ are in
linear general position. We may therefore apply an automorphism of $\pp^r$
so that exactly $h - h' \leq r - 1$
of these points lie in $H$.

Since the automorphism group of $\pp^r$ acts transitively on sets of
$h - h' \leq r - 1$ points in linear general position,
$D_1 \cap H$ can be assumed to be general;
in particular, it is a subset of the hyperplane section
of a general rational normal curve.

It remains to see $D_1 \cap \pp^r \smallsetminus H$,
which is a subset of a hyperplane section of a general special BN-curve,
can be specialized to a subset
of a hyperplane section of a general nonspecial BN-curve.
For this, we apply Theorem~1.8 of \cite{rbn}
to degenerate $D_1 \hookrightarrow \pp^r$ to a stable map
$f \colon D \cup_\Gamma \pp^1 \to \pp^r$,
with $f|_D$ a general BN-curve of degree $d_1 - \lfloor \frac{r}{2}\rfloor$
and genus $d_1 - \lfloor \frac{r}{2}\rfloor - r - 1$
(which is in particular nonspecial),
and $f|_{\pp^1}$ of degree $\lfloor \frac{r}{2}\rfloor$,
and $\# \Gamma = \lfloor \frac{r}{2}\rfloor + 2$.
Since $h' \leq h - \lfloor \frac{r}{2}\rfloor$ by assumption,
we can arrange for $D_1 \cap \pp^r \smallsetminus H \subset f(D) \cap H$.
\end{proof}

\begin{defi}
We say that $r$, $d$, $g$, $d'$, $g'$, $n$, and $t$
\emph{satisfy $X(r, d, g, d', g', n, t)$} if
\begin{align}
-g' &\geq 0 \\
d' + g' - 1 &\geq 0 \\
t + g' &\geq 0 \\
g - g' - n + 1 &\geq 0 \label{a0} \\
r(d - d') - (r - 1)(g - g') + (r - 1)n - r^2 + 1 &\geq 0 \label{b0}\\
n + g' - 1 &\geq 0 \label{c0}\\
d' - n &\geq 0 \label{d0}\\
r(d - d') - (r - 4)(g - g') - 2n - 2r + 2 &\geq 0 \label{h0} \\
r + 2 - n - g' &\geq 0 \label{e0}\\
2n + d + g' - d' - g - r - 1 &\geq 0. \label{f0}
\end{align}
\end{defi}

\begin{prop} \label{prop-rat}
Let $r \geq 4$ and
$d$, $g$, $d'$, $g'$, and $t$ be integers which satisfy
\[(r + 1) d - rg - r(r + 1) \geq 0,\]
and suppose there exists an integer $n$ which satisfies
$X(r, d, g, d', g', n, t)$.
Then there exists an interior BN-curve $C' \cup C'' \subset \pp^r$
of degree $d$ and genus $g$,
with $C''$ a general BN-curve in a hyperplane $H$, and $C'$ a general
union of a rational curve of degree $d' + g'$ and $-g' \leq t$ general lines,
all transverse to $H$.
\end{prop}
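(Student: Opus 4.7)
The plan is to construct $C' \cup C''$ explicitly as a reducible nodal curve of the combinatorial type demanded by the proposition, and then to invoke a smoothability result to conclude that it is an interior BN-curve. I would build $C'$ (living transversely across $H$) and $C''$ (living in $H$) separately, then glue them along their common intersection with $H$.

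First I would fix the degree and genus of $C''$. Since $C'$ has total degree $d' = (d' + g') + (-g')$ and must meet $H$ transversely in $d'$ points all lying on $C''$, the curve $C''$ must have degree $d - d'$; and by the genus formula for nodal unions, together with the fact that $C'$ is a disjoint union of $1 - g'$ components of geometric genus zero and hence has arithmetic genus $g'$, the genus of $C''$ must be $g'' = g - g' - d' + 1$. The inequality \eqref{b0} should, after accounting for the role of the parameter $n$, amount to $\rho(d - d', g'', r - 1) \geq 0$, which is precisely what lets me pick $C''$ to be a general BN-curve in $H \cong \pp^{r - 1}$.

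Next I would construct $C'$ as the disjoint union of a smooth rational curve $R \subset \pp^r$ of degree $d' + g'$ (which is positive thanks to \eqref{c0} combined with the bounds on $n$) together with $-g'$ general lines $L_1, \ldots, L_{-g'}$. The statement's hypothesis $-g' \leq t$, equivalent to the $X$-inequality $t + g' \geq 0$, is exactly what guarantees that the conclusion can accommodate the required number of lines. I would then arrange $C'$ transverse to $H$ and choose the $d'$ attachment points on $C''$ generally, with $n$ of them distributed on $R$ and the remaining $d' - n$ on the line components (one per line, say); the inequalities \eqref{a0}, \eqref{d0}, and \eqref{e0} should keep this bookkeeping within the legal ranges.

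The main obstacle is the final step: showing that the resulting reducible curve $C' \cup C''$ is smoothable to an \emph{interior} BN-curve, i.e.\ that it lies in the closure of the unique BN-component of $\bar{M}_g(\pp^r, d)$ and not in any extraneous component. This requires applying a smoothability criterion along the lines of Theorem~1.8 of \cite{rbn} (which was already invoked in the proof of Proposition~\ref{prop:hyp}), whose hypotheses amount to dimension bounds asserting that the local deformation space of $C' \cup C''$ in $\pp^r$ has the expected Brill--Noether dimension \eqref{reldim}. The remaining inequalities \eqref{h0} and \eqref{f0}, together with the coupling of $n$ to the degrees and genera in \eqref{b0}, should be exactly what is required to verify the smoothability hypothesis; translating between the combinatorial data of the construction above and the precise statement of the smoothability theorem is where the real work of the proof lies.
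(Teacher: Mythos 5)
Your overall strategy---build $C''$ in $H$ and $C'$ transverse to $H$, glue, and smooth---matches the paper's, but you misidentify the combinatorics of the gluing, and this is a genuine gap. You assume that all $d'$ points of $C' \cap H$ lie on $C''$, so that the nodal union has $d'$ nodes and $C''$ is forced to have genus $g'' = g - g' - d' + 1$. In the paper's construction only $n$ of the $d'$ points of $C' \cap H$ are attachment points on $C''$ (the remaining $d' - n$ are ordinary transverse intersections of $C'$ with the hyperplane, not nodes of $C' \cup C''$), so $g'' = g - g' - n + 1$. This is precisely the role of the auxiliary integer $n$ in $X(r, d, g, d', g', n, t)$: \eqref{a0} guarantees $g - g' - n + 1 \geq 0$, \eqref{b0} guarantees $\rho(d - d', g - g' - n + 1, r - 1) \geq 0$, \eqref{h0} (via interpolation for $C''$) lets $C''$ pass through $n$ general points of $H$, \eqref{d0} ($d' \geq n$) lets $C'$ --- whose hyperplane section is a general set of $d'$ points --- be chosen through those $n$ points, and \eqref{c0} ($n \geq 1 - g'$) lets every component of $C'$ meet them so the union is connected. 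Under your convention the needed inequality $g - g' - d' + 1 \geq 0$ is \emph{not} implied by $X$: since $d' \geq n$, it is strictly stronger than \eqref{a0}, so your $C''$ may have negative genus and the construction simply does not exist. Likewise, requiring $C''$ to pass through $d'$ rather than $n$ general points is a stronger interpolation condition than \eqref{h0} supplies.

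A secondary, smaller point: the interior-BN-curve conclusion is not obtained from a dimension count on the deformation space in the style of Theorem~1.8 of \cite{rbn}; the paper invokes Corollary~1.11 of \cite{rbn}, and the remaining inequalities \eqref{e0} and \eqref{f0} are tailored to its hypotheses. That substitution would be forgivable on its own, but combined with the wrong node count above, the verification of the smoothability hypotheses cannot be carried out as you describe.
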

\begin{proof}
Let $n$ be some such integer.
Write $g'' = g - g' - n + 1$ and $d'' = d - d'$.
Upon rearrangement, \eqref{a0}
gives $g'' \geq 0$, and \eqref{b0} gives
$\rho(d'', g'', r - 1) \geq 0$.
We may therefore let $C'' \subset H$ be a general BN-curve
of degree $d''$ and genus $g''$.
Note that $C''$ passes through $n$ general points in $H$,
by Theorem~1.2 of \cite{ibe} together with our assumption \eqref{h0}.
Since $d' \geq n$ by \eqref{d0},
and the hyperplane section of a general union of rational curves of total
degree $d'$
is a general set of $d'$ points by Corollary~1.5 of~\cite{tan},
we may thus let $C'$ be a general union of a rational curve of degree $d' + g'$
and $-g' \leq t$ general lines,
passing through $\Gamma$.
Since $n \geq 1 - g'$ by \eqref{c0}, we may further
suppose that every component of $C'$ meets $\Gamma$.

By construction, $C' \cup C''$ is of degree $d' + d'' = d$
and genus $g' + g'' + n - 1 = g$. It thus remains to see it is an interior BN-curve,
which holds by Corollary~1.11 of~\cite{rbn}.
\end{proof}

\begin{defi}
We say that $r$, $d$, $g$, $d'$, $g'$, and $n$
\emph{satisfy $Y(r, d, g, d', g', n)$} if
\begin{align}
g' &\geq 0 \\
(r + 1) d' - rg' - r^2 - r &\geq 0 \\
(2r - 3) d' - (r - 2)^2 g' - 2r^2 + 3r - 9 &\geq 0 \label{x} \\
g - g' - n + 1 &\geq 0 \label{z1} \\
r(d - d') - (r - 1)(g - g') + (r - 1) n - r^2 + 1 &\geq 0 \label{z2} \\
n - 1 &\geq 0 \label{z3} \\
d' - n &\geq 0 \label{y} \\
r (d - d') - (r - 4)(g - g') - 2n - 2r + 2 &\geq 0 \label{z4}\\
2n + d + g' - d' - g - r - 2 &\geq 0. \label{z5}
\end{align}
\end{defi}

\begin{prop} \label{prop2}
Let $r \geq 4$,
and $d$, $g$, $d'$, and $g'$ be integers
which satisfy
\[(r + 1) d - rg - r(r + 1) \geq 0,\]
and suppose there exists an integer $n$
satisfying $Y(r, d, g, d', g', n)$.
Then there exists an interior BN-curve $C' \cup C'' \subset \pp^r$
of degree $d$ and genus $g$, with $C''$ a general BN-curve
in a hyperplane, and $C'$ a general BN-curve of degree $d'$
and genus $g'$ transverse to $H$ ---
such that $C' \cap H$ is a set of $d'$ general points in $H$.
\end{prop}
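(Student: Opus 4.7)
The plan is to follow the same inductive scaffold as Proposition~\ref{prop-rat}, replacing the union of a rational curve and lines that played the role of $C'$ there with a genuine BN-curve. Let $n$ be an integer satisfying $Y(r, d, g, d', g', n)$, and set $g'' = g - g' - n + 1$ and $d'' = d - d'$. The inequalities~\eqref{z1} and~\eqref{z2} give $g'' \geq 0$ and $\rho(d'', g'', r - 1) \geq 0$, so I may let $C'' \subset H$ be a general BN-curve of degree $d''$ and genus $g''$. Invoking Theorem~1.2 of~\cite{ibe}, together with~\eqref{z4} (which plays the role of~\eqref{h0} in the earlier proof), I would then arrange for $C''$ to pass through a prescribed general set $\Gamma \subset H$ of $n$ points.

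Next I would construct $C'$ as a general BN-curve of degree $d'$ and genus $g'$ in $\pp^r$, whose existence follows from $(r + 1)d' - rg' - r^2 - r \geq 0$, i.e.\ $\rho(d', g', r) \geq 0$. The crucial requirement is that $C'$ be transverse to $H$ with $C' \cap H$ a general set of $d'$ points containing $\Gamma$. The bounds $g' \geq 0$, \eqref{z3}, and~\eqref{y} make the numerics consistent ($1 \leq n \leq d'$), while~\eqref{z5} is the dimension count that allows $\Gamma$ to sit inside $C' \cap H$ with the remaining $d' - n$ points still free. The substantive input is~\eqref{x}, which I read as the BN-analog of Corollary~1.5 of~\cite{tan}: namely, the precise hypothesis ensuring that a general BN-curve in $\pp^r$ with these invariants has a general hyperplane section, and moreover can be deformed so that any prescribed general subset of $n$ of those points lies on it.

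Once both $C'$ and $C''$ are constructed with $\Gamma = C' \cap C''$ general in $H$, the resulting nodal curve $C' \cup C''$ has degree $d' + d'' = d$ and genus $g' + g'' + n - 1 = g$. To finish, I would cite Corollary~1.11 of~\cite{rbn} to conclude that $C' \cup C''$ is an interior BN-curve. The main obstacle is the hyperplane-section genericity for $C'$: because $C'$ is now a genuine BN-curve rather than a union of rational curves and lines, the relevant genericity result is more delicate than in Proposition~\ref{prop-rat}, and this is exactly why the extra condition~\eqref{x} appears among the hypotheses defining $Y$. Verifying that \eqref{x} is in fact sufficient for this prescribed-incidence hyperplane-section genericity statement is where I would expect the technical heart of the argument to lie.
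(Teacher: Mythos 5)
Your proposal takes a genuinely different route from the paper, and it has a genuine gap. The paper's proof does not construct $C'$ and $C''$ by hand at all: after observing that the minimal $n$ satisfying $Y(r,d,g,d',g',n)$ also satisfies the modified inequalities \eqref{xp} and \eqref{yp} (and that \eqref{x} implies $(2r-3)(d'+1) - (r-2)^2 g' - 2r^2 + 3r - 9 \geq 0$), it concludes by citing Theorem~1.2 and Remark~1.3 of \cite{rbn2}, which is \emph{precisely} the packaged statement that an interior BN-curve $C' \cup C''$ exists with $C''$ general in $H$ and $C' \cap H$ a general set of $d'$ points. Your plan instead re-runs the construction of Proposition~\ref{prop-rat} with a BN-curve in place of the union of a rational curve and lines.

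The step you defer --- ``verifying that \eqref{x} is in fact sufficient for this prescribed-incidence hyperplane-section genericity statement'' --- is not a technical detail to be checked; it is the entire content of the proposition. For a union of rational curves, Corollary~1.5 of~\cite{tan} gives the generality of the hyperplane section cheaply; for a positive-genus BN-curve $C'$ this is an interpolation statement for the twisted normal bundle $N_{C'}(-1)$ (compare the use of Theorem~1.4 of~\cite{ibe} under hypothesis \eqref{inter} in the proof of Proposition~\ref{prop3}), and it is the main theorem of the external paper the proof is meant to invoke. Without proving it, your argument does not close. Two further points would also fail as written: (i) Corollary~1.11 of~\cite{rbn} establishes interiority in Proposition~\ref{prop-rat} only because there every component of $C'$ is rational and meets $\Gamma$; for a general BN-curve $C'$ glued to $C''$ at $n \leq d'$ of its $d'$ hyperplane points, interiority needs a separate normal-bundle computation of the kind carried out for Proposition~\ref{prop3} (the vanishing $H^1(N_{C'\cup C''}) = 0$). (ii) Your reading of \eqref{z5} as ``freeing the remaining $d'-n$ points'' is off: in the parallel argument for Proposition~\ref{prop3}, the identical inequality \eqref{interior} is used to force $H^1(\oo_{C''}(1)(\Gamma)) = 0$, i.e.\ it controls the curve in the hyperplane, not $C'$.
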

\begin{proof}
Let $n$ be the minimal such integer.
Note that \eqref{x} and \eqref{y}, together with our assumption that $t \geq 0$,
imply
\begin{equation} \label{xp}
(2r - 3) d' - (r - 2)^2 (g' - d' + n) - 2r^2 + 3r - 9 \geq 0. \tag{\ref{x}$'$}
\end{equation}
Additionally, \eqref{y} implies
\begin{equation} \label{yp}
d' - n \geq 0. \tag{\ref{y}$'$}
\end{equation}

Since the left-hand sides of \eqref{x} and \eqref{y}
are nonincreasing in $n$, it follows that $n$ is also the minimal
integer satisfying the system of inequalities
\eqref{xp}, \eqref{z1}, \eqref{z2}, \eqref{z3}, \eqref{yp}, \eqref{z4}, \eqref{z5}.
Additionally, note that \eqref{x} 
also implies
\[(2r - 3) (d' + 1) - (r - 2)^2 g' - 2r^2 + 3r - 9 \geq 0.\]
We conclude the desired curve exists by
applying Theorem~1.2 and Remark~1.3 of \cite{rbn2}.
\end{proof}

\begin{lm} \label{lm:forprop3}
Let $X \subset \pp^r$ be a subscheme of codimension at least $2$,
and $\Delta \subset X$ be a set of $r + 2$ points which are general in some nondegenerate component
of the smooth locus of $X$. Then for every integer $m \geq 1$,
there exists a BN-curve $C$ of degree $rm$ and genus $(r + 1)(m - 1)$
whose intersection with $X$ is exactly the reduced scheme $\Delta$.
\end{lm}
\begin{proof}
We argue by induction on $m$.

When $m = 1$, we first
note that $\aut \pp^r$ acts transitively on sets of $r + 2$
points in linear general position. Applying an automorphism to a rational normal curve,
we may thus find a rational normal curve $C$ passing through $\Delta$.
It is a classical fact (and also an immediate consequence of Theorem~1.3 of~\cite{aly})
that $N_C \simeq \oo_{\pp^1}(r + 2)^{\oplus (r - 1)}$, and so
$N_C(-\Delta) \simeq \oo_{\pp^1}^{\oplus (r - 1)}$
has vanishing cohomology and is generated by global sections.
We may thus deform $C$ to a curve passing through $\Delta$
which avoids any (excess) intersection with any subvariety of codimension at least $2$.

For the inductive step, we let $C_0$ be a BN-curve of degree $r(m - 1)$ and genus $(r + 1)(m - 2)$
whose intersection with $X$ is exactly the reduced scheme $\Delta$.
We then let $\Delta_0$ be a set of $r + 2$ general points on $C_0$.
Applying our inductive hypothesis again, we may find a rational normal curve $C_1$
whose intersection with $X \cup C_0$ is exactly the reduced scheme $\Delta'$.
Taking $C = C_0 \cup C_1$ completes the proof, as this is a BN-curve by Theorem~1.6 of~\cite{rbn}.
\end{proof}

\begin{defi} We say that $r$, $d$, $g$, $d'$, $g'$, $n$, and $m$
\emph{satisfy $Z(r, d, g, d', g', n, m)$} if
\begin{align}
(r + 1) d' - rg' - r^2 - r &\geq 0 \\
g' - (r + 1) m &\geq 0 \\
(2r - 3) (d' - rm) - (r - 2)^2 (g' - (r + 1)m) - 2r^2 + 3r - 9 &\geq 0 \label{inter} \\
g - g' - n + 1 &\geq 0 \\
r(d - d') - (r - 1)(g - g') + (r - 1) n - r^2 + 1 &\geq 0 \\
n - 1 &\geq 0 \\
(d' - rm) - n &\geq 0 \\
r (d - d') - (r - 4)(g - g') - 2n - 2r + 2 &\geq 0 \\
2n + d + g' - d' - g - r - 2 &\geq 0 \label{interior} \\
2(d' - rm) - (r - 3)(g' - (r + 1)m - 1) - (r - 1)(r + 2) &\geq 0 \label{r2} \\
m &\geq 0.
\end{align}
\end{defi}

\begin{prop} \label{prop3}
Let $r \geq 4$,
and $d$, $g$, $d'$, and $g'$ be integers
which satisfy
\[(r + 1) d - rg - r(r + 1) \geq 0,\]
and suppose there exist integers $n$ and $m$ satisfying
$Z(r, d, g, d', g', n, m)$.
Then there exists an interior BN-curve ${C'_1} \cup {C'_2} \cup {C''} \subset \pp^r$
of degree $d$ and genus $g$, with ${C''}$ a general BN-curve
in a hyperplane, and ${C'_1} \cup {C'_2}$ a BN-curve of degree $d'$
and genus $g'$ transverse to $H$ ---
such that ${C'_2} \cap H$ is a set of general points in $H$,
and ${C'_1}$ is either a BN-curve
which is general independent from ${C'_2} \cap H$ or ${C'_1} = \emptyset$.
\end{prop}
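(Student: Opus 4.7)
The plan is to reduce to Proposition~\ref{prop2} by peeling off the degree $rm$ and genus $(r+1)(m-1)$ that $C'_1$ will carry, then to reattach $C'_1$ via Lemma~\ref{lm:forprop3}. Set $d^* \colonequals d - rm$, $g^* \colonequals g - (r+1)m$, $d'_2 \colonequals d' - rm$, and $g'_2 \colonequals g' - (r+1)m$. Since $d^* - d'_2 = d - d'$ and $g^* - g'_2 = g - g'$, the hypotheses $Z(r, d, g, d', g', n, m)$ directly imply both the BN-condition $(r+1)d^* - rg^* - r(r+1) \geq 0$ and $Y(r, d^*, g^*, d'_2, g'_2, n)$: in particular, inequality~(\ref{inter}) of $Z$ is precisely inequality~(\ref{x}) of $Y$ after substitution, while the remaining inequalities of $Y$ either match those of $Z$ outright or follow using $m \geq 0$. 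Proposition~\ref{prop2} then yields an interior BN-curve $C'_2 \cup C'' \subset \pp^r$ of degree $d^*$ and genus $g^*$, with $C''$ a general BN-curve in $H$ and $C'_2$ a general BN-curve of degree $d'_2$ and genus $g'_2$ transverse to $H$, such that $C'_2 \cap H$ is a general set of $d'_2$ points.

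If $m = 0$, I simply take $C'_1 \colonequals \emptyset$ and the construction is complete. If $m \geq 1$, I pick a set $\Delta$ of $r+2$ general points on $C'_2$, chosen independently of $C'_2 \cap H$ and lying off of $H$, and then apply Lemma~\ref{lm:forprop3} with $X \colonequals C'_2 \cup C''$ (of codimension $r - 1 \geq 3$) to produce a BN-curve $C'_1$ of degree $rm$ and genus $(r+1)(m-1)$ whose intersection with $X$ is exactly $\Delta$. A direct arithmetic genus count---the $r+2$ nodes at $\Delta$ contribute $r+1$ and the $n$ nodes along $C'_2 \cap C''$ contribute $n-1$---then shows that $C'_1 \cup C'_2$ has degree $d'$ and genus $g'$, and that the full union has degree $d$ and genus $g$. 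The independence of $C'_1$ from $C'_2 \cap H$ is inherited from that of $\Delta$, while transversality of $C'_1 \cup C'_2$ to $H$ is clear from the construction.

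The most delicate point, and the one I expect to be the main obstacle, is to show that $C'_1 \cup C'_2 \cup C''$ is truly an \emph{interior} BN-curve: we already know this for $C'_2 \cup C''$ from Proposition~\ref{prop2}, but attaching $C'_1$ along $r+2$ general points of $C'_2$ must keep the resulting stable map in the BN-component rather than drift into an extraneous one. I expect this is precisely where the otherwise-unused inequality~(\ref{r2}) of $Z$ enters, via a smoothing-of-BN-curves result in the style of Theorem~1.6 or Corollary~1.11 of \cite{rbn}; verifying the moduli count required for such a smoothing---matching the $\rho = 0$ contribution of $C'_1$ against the $r(r+2)$ incidence conditions imposed by $\Delta$---is the crux of the argument.
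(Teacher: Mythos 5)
Your overall architecture matches the paper's: reduce to Proposition~\ref{prop2} with the shifted invariants $(d - rm,\, g - (r+1)m,\, d' - rm,\, g' - (r+1)m)$ (noting that $\rho$ is unchanged since $(r+1)(rm) - r((r+1)m) = 0$), then attach $C'_1$ along $r+2$ general points $\Delta$ of $C'_2$ via Lemma~\ref{lm:forprop3}. Your check that $Z$ specializes to $Y$ under this substitution and your degree/genus bookkeeping are correct, and the $m = 0$ case is indeed immediate. But the step you yourself flag as ``the crux'' --- that $C'_1 \cup C'_2 \cup C''$ is \emph{interior} --- is precisely the content of this proposition beyond Proposition~\ref{prop2}, and you have not supplied it: ``I expect this is where inequality~\eqref{r2} enters'' is a conjecture, not an argument. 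Moreover, the tool you gesture at (Theorem~1.6 or Corollary~1.11 of \cite{rbn} plus a moduli count) is not what does the work here; Theorem~1.6 of \cite{rbn} shows the union is a BN-curve, but interiority --- lying in only one component of the space of stable maps --- is strictly stronger.

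What the paper actually proves is the vanishing $H^1(N_{C'_2 \cup C''}(-\Delta)) = 0$. This is where \eqref{inter} and \eqref{r2} enter: \eqref{inter} allows one to apply Theorem~1.4 of \cite{ibe} to conclude that $N_{C'_2}(-1)$ satisfies interpolation, and \eqref{r2} is the numerical condition guaranteeing $H^1(N_{C'_2}(-1)(-\Delta)) = 0$ for $\Delta$ a set of $r+2$ general points of $C'_2$. One then climbs a chain of three exact sequences relating $N_{C'_2}(-1)(-\Delta)$, $N_{C'_2\cup C''}|_{C'_2}(-\Gamma - \Delta)$, $N_{C'_2 \cup C''}(-\Delta)$, and $N_{C'_2\cup C''}|_{C''}$ (with $\Gamma = C'_2 \cap C''$); the last of these reduces the problem to $H^1(\oo_{C''}(1)(\Gamma)) = 0$, which is where \eqref{interior} is used, via the estimate $\dim H^1(\oo_{C''}(1)) \leq n = \#\Gamma$ together with the fact that twisting up by a general point drops $h^1$ when it is positive. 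Only with $H^1(N_{C'_2 \cup C''}(-\Delta)) = 0$ in hand do Lemmas~3.2, 3.3, and~3.4 of \cite{rbn} yield interiority of the glued curve. None of this is routine, and without it your argument establishes only that $C'_1 \cup C'_2 \cup C''$ is a BN-curve of the right degree and genus, not that it is interior.
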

\begin{proof}
By Proposition~\ref{prop2},
there exists a BN-curve ${C'_2} \cup {C''} \subset \pp^r$
of degree $d - rm$ and genus $g - (r + 1)m$, with ${C''}$ a general BN-curve
in a hyperplane $H$, and ${C'_2}$ a general BN-curve of degree $d' - rm$
and genus $g' - (r + 1)m$ transverse to $H$ ---
such that ${C'_2} \cap H$ is a set of $d' - rm$ general points in $H$.
Write $\Gamma = {C'_2} \cap {C''}$.

By Theorem~1.4 of~\cite{ibe} together with \eqref{inter}, the bundle $N_{C'_2}(-1)$ satisfies interpolation.
In particular, using \eqref{r2}, we have $H^1(N_{C'_2}(-1)(-\Delta)) = 0$
where $\Delta \subset {C'_2}$ is a set of $r + 2$ general points on ${C'_2}$.
We have the exact sequences
\begin{gather*}
0 \to N_{{C'_2} \cup {C''}}|_{C'_2}(-\Gamma - \Delta) \to N_{{C'_2} \cup {C''}}(-\Delta) \to N_{{C'_2} \cup {C''}}|_{C''} \to 0 \\
0 \to N_{C'_2}(-1)(-\Delta) \to N_{{C'_2} \cup {C''}}|_{C'_2}(-\Gamma - \Delta) \to * \to 0 \\
0 \to N_{{C''}/H} \to N_{{C'_2} \cup {C''}}|_{C''} \to N_H|_{C''}(\Gamma) \simeq \oo_{C''}(1)(\Gamma) \to 0,
\end{gather*}
where the $*$s denote punctual sheaves, which in particular have vanishing $H^1$.
Since Lemma~3.2 of \cite{rbn} gives $H^1(N_{{C''}/H}) = 0$,
we conclude $H^1(N_{{C'_2} \cup {C''}}(-\Delta)) = 0$
provided $H^1(\oo_{C''}(1)(\Gamma)) = 0$.
But since ${C''} \subset H$ is a general BN-curve of degree $d - d'$
and genus $g + 1 - g' - n$, we have either $H^0(\oo_{C''}(1)) = r$
or $H^1(\oo_{C''}(1)) = 0$. In the second case,
$H^1(\oo_{C''}(1)(\Gamma)) = 0$ is immediate. In the first case, this implies
via \eqref{interior} that
\[\dim H^1(\oo_{C''}(1)) = r - \chi(\oo_{C''}(1)) = n - 2 - (2n + d + g' - d' - g - r - 2) \leq n.\]
Since twisting up by a general point drops the dimension
of $H^1$ when that dimension is positive
(the Serre dual of the familiar statement that twisting down
by a general point drops the dimension of $H^0$ when that dimension is
positive), we conclude $H^1(\oo_{C''}(1)(\Gamma)) = 0$
and thus $H^1(N_{{C'_2} \cup {C''}}(-\Delta)) = 0$.

If $m = 0$, we take ${C'_1} = \emptyset$;
as $H^1(N_{{C'_2} \cup {C''}}(-\Delta)) = 0$,
we have
\[H^1(N_{{C'_1} \cup {C'_2} \cup {C''}}) = H^1(N_{{C'_2} \cup {C''}}) = 0.\]
In particular, $[{C'_1} \cup {C'_2} \cup {C''} = {C'_2} \cup {C''}]$ is a smooth point
of the Hilbert scheme (or equivalently the corresponding immersion is a smooth point of the space
of stable maps),
and is thus an interior curve as desired.

If $m \geq 1$, we apply Lemma~\ref{lm:forprop3} to let
${C'_1}$ be a BN-curve of degree
$rm$ and genus $(r + 1)(m - 1)$ whose intersection with ${C'_2}$ is exactly $\Delta$.
We then deform ${C'_1}$ to be general in some component of the space of BN-curves
passing through $\Delta$.
By Theorem~1.6 of~\cite{rbn}, both ${C'_1} \cup {C'_2}$ and ${C'_1} \cup {C'_2} \cup {C''}$ are BN-curves.
Moreover, using our assumption that $H^1(N_{{C'_2} \cup {C''}}(-\Delta)) = 0$,
Lemmas~3.2, 3.3, and 3.4 of \cite{rbn} imply ${C'_1} \cup {C'_2} \cup {C''}$
is an interior curve, as desired.
\end{proof}

In Appendix~\ref{app:inequalities}, we include code in {\sc sage}
to check or create algebraic expressions for all inequalities that appear in this section.

\section{Space Curves \label{sec:space}}

In this section, we prove Theorem~\ref{main} for space curves
($r = 3$); this will serve as one of the base cases for our larger
inductive argument.
Our argument here will also be by induction, this time on $n$.
Recall that, as mentioned in the introduction,
it suffices to prove maximal rank for $T$, and we may take $\epsilon = 0$ (c.f.\ \eqref{eps3}).

Our base case will be $n = 0$. The result here is a
well-known classical fact
if $C$ is a general rational curve of degree $d < r$
(in this case the restriction map is always surjective).
When $C$ is a BN-curve, the result is due to
Ballico and Ellia \cite{ball};
however, to make the paper as self-contained as possible,
we give an independent (and simpler) proof of this fact in Appendix~\ref{app:space}.

We therefore assume for our inductive argument that $n \geq 1$.
Write $d_i = \deg D_i$ and $g_i = \operatorname{genus} D_i$,
and suppose without loss of generality
that $d_1 \geq d_2 \geq \cdots \geq d_n$;
write $d$ and $g$ for the degree and genus of $C$.

By our inductive hypothesis, the subscheme
\[T_{n-1} := C \cup ((D_1 \cup D_2 \cup \cdots \cup D_{n - 1}) \cap \pp^r) \subset \pp^r\]
satisfies maximal rank for polynomials of degrees $k$ and $k - 1$.
Moreover, if $\Lambda \subset \pp^3$ is a general plane, and $k \neq 2$, then Theorem~1.3
of \cite{hyp} implies $T_{n-1} \cap \Lambda = C \cap \Lambda \subset \Lambda$
satisfies maximal rank for polynomials of degree $k$.
Since $T_{n-1}$ is positive-dimensional, an application of Theorem~1.5 
of \cite{hyp} completes the proof unless $k \geq 3$ and $(d_n, g_n) \in \{(8, 5), (9, 6), (10, 7)\}$, and
\[\dim H^0(\oo_{\pp^3}(k - 1)) > \dim H^0(\oo_{T_{n-1}}(k - 1)) \tand \dim H^0(\oo_\Lambda(k)) < 8 + \dim H^0(\oo_{C \cap \Lambda}(k)),\]
or equivalently, 
\begin{align}
\binom{k + 2}{3} &\geq (k - 1)d - g + 2 + \sum_{i = 1}^{n-1} d_i  \label{foo} \\
\binom{k + 2}{2} &\leq 7 + d. \label{bar}
\end{align}
Note also that in this case, $D_n \cap \pp^3$ is the general complete intersection
of $11 - d_n$ quadrics (c.f.\ Theorem~1.6 of \cite{quadrics});
in particular, we may specialize it to the union of the general complete intersection
of $3$ quadrics plus $d_n - 8$ additional general points.
We may thus reduce to the case $d_n = 8$.

Since $k \geq 3$, the inequality \eqref{bar} implies $d \geq \binom{k + 2}{2} - 7 \geq \binom{3 + 2}{2} - 7 = 3$;
in particular, $C$ cannot be a degenerate rational curve. We thus have $\rho(d, g, 3) \geq 0$,
or upon rearrangement:
\begin{equation} \label{gupper}
g \leq \frac{4d - 12}{3}.
\end{equation}
Combining \eqref{gupper} with our assumption that
$d_i \geq d_n \geq 8$ for all $i$,
condition \eqref{foo} implies
\[\binom{k + 2}{3} \geq \frac{3k - 7}{3} d + 8n - 2.\]
Rearranging and combining this with \eqref{bar}, we obtain
\begin{equation} \label{meh}
\frac{k^2 + 3k - 12}{2} \leq d \leq \frac{k^3 + 3 k^2 + 2 k + 12 - 48 n}{6k - 14}.
\end{equation}
In particular, if $n \geq 2$, then
\[\frac{k^2 + 3k - 12}{2} \leq \frac{k^3 + 3 k^2 + 2 k - 84}{6k - 14},\]
which does not hold for any $k \geq 3$; consequently, $n = 1$.
In this case,
\[\frac{k^2 + 3k - 12}{2} \leq \frac{k^3 + 3 k^2 + 2 k - 36}{6k - 14},\]
which does not hold for any $k \geq 5$; consequently, $k \in \{3, 4\}$.

For each $k$, equation \eqref{meh} gives upper and lower bounds on $d$;
for each such $d$, equations \eqref{gupper} and \eqref{foo} then give upper
and lower bounds respectively on $g$.

Using these bounds,
it thus remains only to consider the case where $n = 1$, and $D_1 = D_n$
is a canonical curve; and either $k = 3$ and
\[(d, g) \in \{(3, 0), (4, 0), (4, 1), (5, 2), (6, 4)\},\]
or $k = 4$ and
\[(d, g) = (8, 6).\]

If $k = 3$ and $(d, g) \in \{(4, 0), (5, 2), (6, 4)\}$,
then using our inductive hypothesis that $C$ satisfies maximal rank for cubics,
we see that $C$ lies on at most a $7$ dimensional family of cubics.
But since $D_1 \cap \pp^3$ is a general complete intersection of $3$ quadrics,
it contains $7$ general points.
Consequently, $C \cup (D_1 \cap \pp^3)$ does not lie on any cubics,
and so satisfies maximal rank for cubics.

If $k = 3$ and $(d, g) \in \{(3, 0), (4, 1)\}$, then partition $D_1 \cap \pp^3 = A \cup B$
into two sets of $4$ points, and let $Q$ be a general quadric containing $A$.
Since any subset of $7$ points of $D_1 \cap \pp^3$ are general, $Q$ is smooth and does not contain any point of $B$;
moreover, $A$ is a set of $4$ general points on $Q$, while $B$ is a set of $4$ general points in $\pp^3$
(although not independently general from $A$!).
We now specialize $C$ to a curve of type $(a, 2)$ on $Q$,
where
\[a = \begin{cases}
1 & \text{if $(d, g) = (3, 0)$;} \\
2 & \text{if $(d, g) = (4, 1)$.}
\end{cases}\]
The exact sequence of sheaves
\[0 \to \ii_{B \subset \pp^3} (1) \to \ii_{A \cup B \cup C \subset \pp^3} (3) \to \ii_{A \subset Q} (3 - a, 1) \to 0\]
gives rise to the long exact sequence in cohomology
\[\cdots \to H^1(\ii_{B \subset \pp^3} (1)) \to H^1(\ii_{A \cup B \cup C \subset \pp^3} (3)) \to H^1(\ii_{A \subset Q} (3 - a, 1)) \to \cdots.\]
Since $A$ and $B$ are general sets of $4$ points in $Q$ and $\pp^3$ respectively,
$\dim H^0(\oo_{\pp^3} (1)) = 4$ and $\dim H^0(\oo_Q(3 - a, 1)) = 8 - 2a \geq 4$,
while $H^1(\oo_{\pp^3} (1)) = H^1(\oo_Q(3-a, 1)) = 0$,
we conclude that $H^1(\ii_{B \subset \pp^3} (1)) = H^1(\ii_{A \subset Q} (3 - a, 1)) = 0$,
which implies $H^1(\ii_{A \cup B \cup C \subset \pp^3} (3)) = 0$ as desired.

If $k = 4$ and $(d, g) = (8, 6)$, then partition $D_1 \cap \pp^3 = A \cup B$
into two sets of $4$ points, and let $S$ be a general cubic containing $A$.
As in the previous cases, $S$ is smooth and does not contain any point of $B$;
moreover, $A$ is a set of $4$ general points on $S$, while $B$ is a set of $4$ general points in $\pp^3$.
Write $S$ as the blowup of $\pp^2$ at six points, $L$ for the pullback of the class of a line in $\pp^2$ to $S$,
and $E_1, E_2, \ldots, E_6$ for the six exceptional divisors.
By Lemma 9.3 of \cite{quadrics} plus results of \cite{keem}, we may specialize $C$ to a curve on $S$ of class
\[6L - E_1 - E_2 - 2E_3 - 2E_4 - 2E_5 - 2E_6.\]
The exact sequence of sheaves
\[0 \to \ii_{B \subset \pp^3} (1) \to \ii_{A \cup B \cup C \subset \pp^3} (3) \to \ii_{A \subset S} (3L - 2E_1 - 2E_2 - E_3 - E_4 - E_5 - E_6) \to 0\]
gives rise to the long exact sequence in cohomology
\[\cdots \to H^1(\ii_{B \subset \pp^3} (1)) \to H^1(\ii_{A \cup B \cup C \subset \pp^3} (4)) \to H^1(\ii_{A \subset S} (6L - 3E_1 - 3E_2 - 2E_3 - 2E_4 - 2E_5 - 2E_6)) \to \cdots.\]
Since the cone of effective curves on $S$ is spanned by the $27$ lines,
the Nakai-Moishezon criterion implies $9L - 4E_1 - 4E_2 - 3E_3 - 3E_4 - 3E_5 - 3E_6$ is ample.
Consequently, by Kodaira vanishing, $6L - 3E_1 - 3E_2 - 2E_3 - 2E_4 - 2E_5 - 2E_6$
has no higher cohomology.
In particular, by the Riemann--Roch theorem for surfaces, \mbox{$\dim H^0(\oo_S(6L - 3E_1 - 3E_2 - 2E_3 - 2E_4 - 2E_5 - 2E_6)) = 4$}.
Since $A$ and $B$ are general sets of $4$ points in $S$ and $\pp^3$ respectively,
we conclude that
\[H^1(\ii_{B \subset \pp^3} (1)) = H^1(\ii_{A \subset Q} (6L - 3E_1 - 3E_2 - 2E_3 - 2E_4 - 2E_5 - 2E_6)) = 0,\]
which implies $H^1(\ii_{A \cup B \cup C \subset \pp^3} (4)) = 0$ as desired,
thus completing the inductive step.

\section{Quadrics \label{sec:quadrics}}

In this section, we prove Theorem~\ref{main} for quadrics ($k = 2$);
this will serve as another base case for our larger inductive argument.
Our argument here will be by induction on $r$, and for fixed
$r$ by induction on $d$ --- using a construction
due to Ballico in \cite{ball2},
although our proof here will be logically independent from \cite{ball2}.
The base case of $r = 3$ was done in Section~\ref{sec:space},
so we suppose for our inductive argument that $r \geq 4$.
As noted in the introduction, it suffices to show $T$ satisfies maximal rank subject to \eqref{epsgeq4}.

If $g \geq d + 2$, then $C$
cannot be a degenerate rational curve, and so $C$ is a BN-curve
which satisfies
\begin{gather*}
(r + 1)(d - r) - r(g - r - 1) - r(r + 1) = (r + 1)d - rg - r(r + 1) \geq 0 \\
g - r - 1 \geq g - r - 1 - (r + 1)(g - d - 2) - [(r + 1)d - rg - r(r + 1)] = r^2 + 2r + 1 \geq 0.
\end{gather*}
We may therefore let $C'$ be a BN-curve of degree $d - r$
and genus $g - r - 1$ in $\pp^r$.
Applying Theorem~1.6 of~\cite{rbn} and Lemma~\ref{lm:forprop3},
we may degenerate $C$ to the union of $C'$ and an $(r + 2)$-secant
rational normal curve.
Note that the degree and genus of $C'$ satisfy
\begin{align*}
2(d - r) + 1 - (g - r - 1) &\geq 2(d - r) + 1 - (g - r - 1) - (r - 1)(g - d - 2) \\
&\qquad - [(r + 1)d - rg - r(r + 1)] \\
&= r^2 + 2r \geq \binom{r + 2}{2}.
\end{align*}
Thus by our inductive hypothesis,
$C'$ does not lie on any quadrics.
In particular, we conclude that $C$ and therefore $T$
also does not lie on any quadrics,
as desired.

We thus suppose $g \leq d + 1$ for the remainder of this section.
In these cases,
we pick a hyperplane $H \subset \pp^{r + 1}$, transverse to $\pp^r \subset \pp^{r + 1}$,
and write $\Lambda = \pp^r \cap H$ for the corresponding hyperplane in $\pp^r$.

If $C$ is a degenerate rational curve, we degenerate $C$ to a general rational curve in $\Lambda$,
and invoke Corollary~4.2 of \cite{hyp} to degenerate
each $D_i$ to a reducible curve $D_i' \cup D_i''$
with the $D_i' \subset \pp^{r + 1}$ and $D_i'' \subset H$ each sets of independently general
BN-curves or rational normal curves, which satisfy $\sum \deg D_i' = r + 1$.
With $(k, d', g', h') = (2, 0, 1, r + 1)$,
we note that \eqref{upstairs}
is an equality; using \eqref{epsgeq4}, this implies \eqref{downstairs} holds too.
In particular, $r$, $d$, $g$, and $h$ satisfy $I(r, 2, d, g, 0, 1, h, r + 1)$.
Applying Proposition~\ref{hir} thus yields the desired result.

Otherwise, if $C$ is a BN-curve, we first
invoke Corollary~4.2 of \cite{hyp} to degenerate
each $D_i$ to a general BN-curve $D_i^\circ \subset H$.

If $C$ is nonspecial, write $s = \min(r, g + 1)$.
Let $d'' = d - r$ and $g'' = g + 1 - s$.
Then we have either $g'' \geq 0$ and $d'' \geq g + r - 1$,
or $g'' = 0$ and $0 \leq s - 1 \leq d'' \leq r - 2$.
Therefore we can find a nonspecial BN-curve or degenerate rational curve
$Y \subset \Lambda$, of degree~$d''$
and genus~$g''$, which passes through $s$ general points in $\Lambda$.
We may thus construct a reducible curve $C' \cup C'' \subset \pp^r$,
where $C'' \subset \Lambda \subset \pp^r$ is as above,
$C' \subset \pp^r$ is a general rational normal curve
transverse to $\Lambda$, meeting $C''$ at $s$ general points in $\Lambda$.
This curve has degree $d'' + r = d$ and genus $g'' + s - 1 = g$,
and is a BN-curve by Theorem~1.6 of \cite{rbn};
this curve is thus a specialization of~$C$.
With $(k, d', g', h') = (2, r, 0, 0)$,
we note that \eqref{upstairs}
is an equality; using \eqref{epsgeq4}, this implies \eqref{downstairs} holds too.
In particular, $r$, $d$, $g$, and $h$ satisfy $I(r, 2, d, g, r, 0, h, 0)$.
Applying Proposition~\ref{hir} thus yields the desired result.

If $C$ is special (so $g + r - d - 1 \geq 0$),
then writing $d'' = d - r - 1$ and $g'' = g - r - 1$, we have
\begin{align*}
g'' &= [(r + 1)d - rg - r(r + 1)] + (r + 1)(g + r - d - 1) \geq 0 \\
\rho(d'', g'', r - 1) &= [(r + 1)d - rg - r(r + 1)] + (g + r - d - 1) \geq 0.
\end{align*}
We may therefore let $C'' \subset \Lambda$ be a general BN-curve of degree
$d''$ and genus $g''$.
Since $\operatorname{Aut} \Lambda$
acts transitively on collections of $r + 1$ points in linear general position,
we may therefore construct a reducible curve $C' \cup C'' \subset \pp^r$,
where $C'' \subset \Lambda \subset \pp^r$ is as above,
$C' \subset \pp^r$ is a general BN-curve
of degree $r + 1$ and genus $1$ transverse to $\Lambda$,
and $C' \cap \Lambda = C' \cap C''$ is a set of $r + 1$ general points in $\Lambda$.
This curve has degree $d'' + r + 1 = d$ and genus $g'' + r + 1 = g$,
and is a BN-curve by Theorem~1.9 of \cite{rbn} since our assumption that
$g \leq d + 1$ implies $d'' + (r + 1) \geq g'' + r$;
this curve is thus a specialization of $C$.
With $(k, d', g', h') = (2, r + 1, 1, 0)$,
we note that \eqref{upstairs}
is an equality;
using \eqref{epsgeq4}, this implies \eqref{downstairs} holds too.
In particular, $r$, $d$, $g$, and $h$ satisfy $I(r, 2, d, g, r + 1, 1, h, 0)$.
Applying Proposition~\ref{hir} thus yields the desired result.

\section{\boldmath Curves of Extreme Degree \label{extp}}

In this section, we deal with the ``easy'' cases of Theorem~\ref{main};
i.e.\ with those cases where the 
restriction map is ``far'' from being an isomorphism.
Since the cases of space curves and quadrics
have already been considered in the previous two sections,
we suppose $r \geq 4$ and $k \geq 3$.

\begin{defi}
We say that integers $r$, $k$, $d$, $g$, and $h$
\emph{satisfy $U(r, k, d, g, h)$} if
\begin{align*}
kd + 1 - g + h - \binom{r + k}{k} &\geq 0 \\
\frac{r}{r + k} \cdot \binom{r + k}{k} - d - \frac{r}{r + 1} \cdot h + r - 1 &\geq 0\\
g &\geq 0 \\
(r + 1) d - r g - r (r + 1) &\geq 0.
\end{align*}
We say that integers $r$, $k$, $d$, $g$, and $h$
\emph{satisfy $V(r, k, d, g, h)$} if they satisfy $U(r, k, d, g, h)$
and 
\[(r + 1)g + rh - r^2 - rk + r - k + 2 \geq 0;\]
while we say they \emph{satisfy $W(r, k, d, g, h)$} if they satisfy $U(r, k, d, g, h)$
and 
\[(r + 1)g + rh - r^2 - rk + r - k + 2 \leq -1.\]
\end{defi}

In this section, we show it suffices to verify Theorem~\ref{main}
when $r$, $k$, $d$, $g$, and $h$ satisfy either $V(r, k, d, g, h)$
or $W(r, k, d, g, h)$ --- or equivalently that it suffices to verify
Theorem~\ref{main}
when $r$, $k$, $d$, $g$, and $h$ satisfy $U(r, k, d, g, h)$.

Since $g \geq 0$ and we may suppose \eqref{epsgeq4} holds,
it remains to verify
Theorem~\ref{main} when 
\eqref{epsgeq4} holds and $C$ is a degenerate rational curve,
or \eqref{epsgeq4} holds and
\begin{equation} \label{rem}
\frac{r}{r + k} \cdot \binom{r + k}{k} - d - \frac{r}{r + 1} \cdot h + r - 1 < 0.
\end{equation}

We first verify Theorem~\ref{main} when \eqref{epsgeq4} and \eqref{rem} hold.
Fix a hyperplane $H \subset \pp^r$.
Applying Proposition~\ref{prop:hyp},
we may degenerate each $D_i$
to curves $D_i^\circ$, and the points $\{p_1, p_2, \ldots, p_\epsilon\}$
to points $\{p_1^\circ, p_2^\circ, \ldots, p_\epsilon^\circ\}$,
with
\[\#(\{p_1^\circ, p_2^\circ, \ldots, p_\epsilon^\circ\} \cap H) + \sum \# (D_i^\circ \cap H) = h - h',\]
so that
\begin{gather*}
\{p_1^\circ, p_2^\circ, \ldots, p_\epsilon^\circ\} \cap H \subset H \quad \text{and} \quad \\
\{p_1^\circ, p_2^\circ, \ldots, p_\epsilon^\circ\} \cap \pp^r \smallsetminus H \subset \pp^r
\end{gather*}
are sets of general points, and
\begin{gather*}
D_1^\circ \cap H, D_2^\circ \cap H, \ldots, D_n^\circ \cap H \subset H \quad \text{and} \\
D_1^\circ \cap \pp^r \smallsetminus H, D_2^\circ \cap \pp^r \smallsetminus H, \ldots, D_n^\circ \cap \pp^r \smallsetminus H \subset \pp^r
\end{gather*}
are sets of subsets of hyperplane sections of independently general
BN-curves, provided that $r$, $h$, and $h'$ satisfy $J(r, h, h')$ or $K(r, h, h')$.
Moreover, we can assume the second of these sets is a set of subsets
of hyperplane sections of independently general
nonspecial BN-curves provided that $r$, $h$, and $h'$ satisfy $N(r, h, h')$ or $K(r, h, h')$.

Applying Proposition~\ref{hir}, it thus remains to show there exists an integer $h'$
so that $r$, $k$, $d$, $g$, $h$, and $h'$ satisfy
\[I(r, k, d, g, d, g, h, h') \tand \begin{cases}
K(r, h, h') & \text{if $h \leq 2r + 1$;} \\
N(r, h, h') & \text{if $h \geq 2r + 2$ and $k = 3$;} \\
J(r, h, h') & \text{if $h \geq 2r + 2$ and $k \geq 4$.}
\end{cases}\]
Since these upper and lower bounds on $h'$
are all integers, we just have to check:
\begin{align*}
\frac{k}{r + k} \cdot \binom{r + k}{k} - (k - 1)d - 1 + g &\leq d + h - \frac{r}{r + k} \cdot \binom{r + k}{k}; \\
\begin{cases}
r + \left \lfloor \frac{h}{r + 1} \right \rfloor & \text{if $h \geq 2r + 2$,} \\
0 & \text{otherwise}
\end{cases} &\leq \begin{cases} 
h - r \cdot \left\lfloor \frac{h}{2r + 2}\right\rfloor & \text{if $h \geq 2r + 2$ and $k = 3$,} \\
h & \text{otherwise;}
\end{cases}
\\
\frac{k}{r + k} \cdot \binom{r + k}{k} - (k - 1)d - 1 + g &\leq \begin{cases} 
h - r \cdot \left\lfloor \frac{h}{2r + 2}\right\rfloor & \text{if $h \geq 2r + 2$ and $k = 3$,} \\
h & \text{otherwise;}
\end{cases} \\
\begin{cases}
r + \left \lfloor \frac{h}{r + 1} \right \rfloor & \text{if $h \geq 2r + 2$,} \\
0 & \text{otherwise}
\end{cases} &\leq d + h - \frac{r}{r + k} \cdot \binom{r + k}{k}.
\end{align*}
The first of these is just \eqref{epsgeq4} upon rearrangement.
The second is immediate by separately considering the cases $h \geq 2r + 2$
(which implies
$r + \frac{h}{r + 1} \leq h - r \cdot \frac{h}{2r + 2} \leq h$)
and $0 \leq h \leq 2r + 1$.
The final of these inequalities
follows from \eqref{rem}, which implies 
$r + \frac{h}{r + 1} < d + h - \frac{r}{r + k} \cdot \binom{r + k}{k} + 1$.

It thus remains to show the third of these inequalities.
By separately considering the cases when $C$ is a degenerate
rational curve and when $C$ is a BN-curve, we always have
\[(r + 1)d - rg - (r + 1) \geq 0.\]
Adding $kr - 2r - 1$ times
\eqref{epsgeq4}, we obtain upon rearrangement
\begin{align*}
\frac{k}{r + k} \cdot \binom{r + k}{k} - (k - 1)d - 1 + g &\leq \frac{kr - 2r - 1}{kr - r - 1} \cdot h - \frac{r(rk - 2r - k - 1) \cdot \binom{r + k}{k} + (2r + 1)(r + k)}{(kr - r - 1)(r + k)} \\
&\leq \frac{kr - 2r - 1}{kr - r - 1} \cdot h.
\end{align*}
It thus remains to show
\[\frac{kr - 2r - 1}{kr - r - 1} \cdot h \leq \begin{cases} 
h - r \cdot \left\lfloor \frac{h}{2r + 2}\right\rfloor & \text{if $h \geq 2r + 2$ and $k = 3$;} \\
h & \text{otherwise.}
\end{cases}\]
Since $h - r \cdot \left\lfloor \frac{h}{2r + 2}\right\rfloor \geq \frac{r + 2}{2r + 2} \cdot h$,
this reduces in turn to
\[\frac{kr - 2r - 1}{kr - r - 1} \leq \begin{cases} 
\frac{r + 2}{2r + 2} & \text{if $k = 3$;} \\
1 & \text{otherwise.}
\end{cases}\]
This is clear, this completing the verification of Theorem~\ref{main}
when \eqref{epsgeq4} and \eqref{rem} hold.

It thus remains to verify Theorem~\ref{main} when \eqref{epsgeq4} holds
and $C$ is a degenerate rational curve, but \eqref{rem} does not hold.
In this case, \eqref{epsgeq4} gives
\[kd + 1 + h \geq \binom{r + k}{k}.\]
Multiplying this by $r$ and adding $kr - r - 1$ times the inequality $d \leq r - 1$,
we obtain
\begin{align*}
\frac{r}{r + k} \cdot \binom{r + k}{k} - d - \frac{r}{r + 1} \cdot h + r - 1 &\leq \frac{1}{(r + 1)(r + k)} \cdot \left(r (r + k) (rk - k + 1) - r(k - 1) \cdot \binom{r + k}{k}\right) \\
\intertext{Since $r \geq 4$ and $k \geq 3$,}
&\leq \frac{1}{(r + 1)(r + k)} \cdot \left(r (r + k) (rk - k + 1) - r(k - 1) \cdot \binom{r + k}{3}\right) \\
&= -\frac{r}{6(r + 1)}
\cdot \Big(
(r - 4)^2(k - 3) + 2(r - 4)(k - 3)^2 \\
&\qquad + (k - 3)^3 + 2(r - 4)^2 + 9(r - 4)(k - 3) \\
&\qquad + 13(k - 3)^2 + 4(r - 4) + 34(k - 3) \Big) \\
&\leq 0.
\end{align*}
Since by assumption \eqref{rem} does not hold in this case,
we must have equality everywhere. In particular,
$(r, k) = (4, 3)$, and $d = r - 1 = 3$, and $h = \binom{r + k}{k} - kd - 1 = 25$.
It thus remains to consider the case $(r, k, d, g, h) = (4, 3, 3, 0, 25)$.

In this case, we let $H \simeq \pp^3 \subset \pp^4$ be the hyperplane
containing $C$.
Applying Proposition~\ref{prop:hyp},
we may degenerate each $D_i$
to curves $D_i^\circ$, and the points $\{p_1, p_2, \ldots, p_\epsilon\}$
to points $\{p_1^\circ, p_2^\circ, \ldots, p_\epsilon^\circ\}$,
with
\[\#(\{p_1^\circ, p_2^\circ, \ldots, p_\epsilon^\circ\} \cap H) + \sum \# (D_i^\circ \cap H) = 15,\]
so that
\begin{gather*}
\{p_1^\circ, p_2^\circ, \ldots, p_\epsilon^\circ\} \cap H \subset H \quad \text{and} \quad \\
\{p_1^\circ, p_2^\circ, \ldots, p_\epsilon^\circ\} \cap \pp^4 \smallsetminus H \subset \pp^4
\end{gather*}
are sets of general points, and
\begin{gather*}
D_1^\circ \cap H, D_2^\circ \cap H, \ldots, D_n^\circ \cap H \subset H \quad \text{and} \\
D_1^\circ \cap \pp^4 \smallsetminus H, D_2^\circ \cap \pp^4 \smallsetminus H, \ldots, D_n^\circ \cap \pp^4 \smallsetminus H \subset \pp^4
\end{gather*}
are sets of subsets of hyperplane sections of independently general
nonspecial BN-curves.
Applying Proposition~\ref{hir} then yields the desired result.

\section{The Inductive Argument \label{sec:inductive}}

In this section, we give our inductive argument to prove Theorem~\ref{main}.
We begin with the case
when $r$, $k$, $d$, $g$, and $h$ satisfy $W(r, k, d, g, h)$.
In this case, specializing $\{p_1, p_2, \ldots, p_\epsilon\}$
to general points in $H$, and combining Propositions~\ref{hir}, \ref{prop:crack}, and \ref{prop-rat},
we see that it suffices to check:

\begin{lm} \label{lm:w}
Let $r \geq 4$, and $k \geq 3$, and $d$, $g$, and $h$ be integers satisfying
$W(r, k, d, g, h)$.
Then for every integer $h'$ satisfying $A(r, h, h')$,
there exist integers $d'$, $g'$, and $n$ satisfying
\[X(r, d, g, d', g', n, k - 2) \tand I(r, k, d, g, d', g', h, h').\]
\end{lm}
\begin{proof}
This will be deferred to Appendix~\ref{code:w}.
\end{proof}

Next, we consider the case of cubic polynomials ($k = 3$),
when $r$, $d$, $g$, and $h$ satisfy $V(r, 3, d, g, h)$.
In this case, combining Propositions~\ref{hir}, \ref{prop:hyp}, \ref{prop-rat} (with $t = 0$), and~\ref{prop2},
we see that it suffices to check:

\begin{lm} \label{lm:v3}
Let $r \geq 4$, and $d$, $g$, and $h$ be integers satisfying
$V(r, 3, d, g, h)$.
Then there exist integers $d'$, $g'$, $h'$, and $n$ satisfying
\begin{itemize}
\item $X(r, d, g, d', g', n, 0)$, $I(r, 3, d, g, d', g', h, h')$, and $K(r, h, h')$;
\item $X(r, d, g, d', g', n, 0)$, $I(r, 3, d, g, d', g', h, h')$, and $M(r, h, h')$;
\item $X(r, d, g, d', g', n, 0)$, $I(r, 3, d, g, d', g', h, h')$, and $N(r, h, h')$;
\item $Y(r, d, g, d', g', n)$, $I(r, 3, d, g, d', g', h, h')$, and $K(r, h, h')$;
\item $Y(r, d, g, d', g', n)$, $I(r, 3, d, g, d', g', h, h')$, and $L(r, h, h')$; or
\item $Y(r, d, g, d', g', n)$, $I(r, 3, d, g, d', g', h, h')$, and $N(r, h, h')$.
\end{itemize}
\end{lm}
\begin{proof}
This will be deferred to Appendix~\ref{code:v3}.
\end{proof}

Finally, we consider the case of polynomials of higher degree ($k \geq 4$),
when $r$, $k$, $d$, $g$, and $h$ satisfy $V(r, k, d, g, h)$.
In this case, combining Propositions~\ref{hir}, \ref{prop:hyp}, \ref{prop-rat} (with $t = 0$), and~\ref{prop2},
we see that it suffices to check:

\begin{lm} \label{lm:v-higher}
Let $r \geq 4$ and $k \geq 4$, and $d$, $g$, and $h$ be integers satisfying
$V(r, k, d, g, h)$.
Then either:
\begin{itemize}
\item There exist integers $d'$, $g'$, $h'$, and $n$ satisfying
\[X(r, d, g, d', g', n, 0), \quad I(r, k, d, g, d', g', h, h'), \quad \text{and} \quad J(r, h, h');\]
\item There exist integers $d'$, $g'$, $h'$, and $n$ satisfying
\[Y(r, d, g, d', g', n), \quad I(r, k, d, g, d', g', h, h'), \quad \text{and} \quad J(r, h, h');\]
\item There exist integers $d'$, $g'$, $h'$, and $n$ satisfying
\[Y(r, d, g, d', g', n), \quad I(r, k, d, g, d', g', h, h'), \quad \text{and} \quad K(r, h, h');\]
\item There exist integers $d'$, $g'$, $h'$, $n$, and $m$ satisfying
\[Z(r, d, g, d', g', n, m), \quad I(r, k, d, g, d', g', h, h'), \quad \text{and} \quad J(r, h, h');\]
or
\item There exist integers $d'$, $g'$, $h'$, $n$, and $m$ satisfying
\[Z(r, d, g, d', g', n, m), \quad I(r, k, d, g, d', g', h, h'), \quad \text{and} \quad K(r, h, h').\]
\end{itemize}
\end{lm}
\begin{proof}
This will be deferred to Appendix~\ref{code:v-higher}.
\end{proof}

\appendix

\section{The Union of a Rational Curve with Disjoint Lines \label{app:lines}}

In this appendix, we prove the following special case of Ballico and Ellia's theorem
\cite{linesgeq4}
which we shall need for this paper.

\begin{thm}[Special case of \cite{linesgeq4}] \label{thm:lines}
Let $C \subset \pp^r$ (for $r \geq 3$) be a general rational curve of degree $d - n$,
and $L_1, L_2, \ldots, L_n \subset \pp^r$
be independently general lines.
Then $T \colonequals C \cup L_1 \cup \cdots \cup L_n$
satisfies maximal rank for polynomials of degree $k$ if $n \leq k - 1$.
\end{thm}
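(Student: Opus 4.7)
The plan is to prove a slightly generalized statement---allowing the addition of an arbitrary number of independently general points to $T$---by induction on $r$, then on $k$, then on $n$; the generalization is needed so that the hyperplane section below falls inside the inductive hypothesis, and setting the number of points to zero recovers Theorem~\ref{thm:lines}. The base case $n = 0$ is the classical fact that a general rational curve together with general points satisfies maximal rank in every degree. The base case $r = 3$ is handled by the theorem of Hartshorne--Hirschowitz \cite{lines3} together with an easy argument absorbing the rational curve and general points.

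For the inductive step ($r \geq 4$, $n \geq 1$), I fix a general hyperplane $H \subset \pp^r$ and specialize some number $m$ of the lines, with $\max(0, n-k+2) \leq m \leq n$, to lie inside $H$ while keeping $C$ and the remaining lines transverse to $H$. Since maximal rank is open, semicontinuity reduces the task to verifying maximal rank for this specialization. The residual exact sequence
\[0 \to \ii_{T_0 \subset \pp^r}(k-1) \to \ii_T(k) \to \ii_{T \cap H \subset H}(k) \to 0\]
has residual $T_0 = C \cup L_1 \cup \cdots \cup L_{n-m}$---a rational curve plus $n - m \leq k-2$ lines in $\pp^r$---and hyperplane section $T \cap H$ equal to $m \leq k-1$ lines together with a set $\Gamma$ of $d - m$ general points in $H \simeq \pp^{r-1}$. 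The residual satisfies maximal rank for polynomials of degree $k-1$ by the inductive hypothesis (smaller $k$, smaller $n$), and the hyperplane section satisfies maximal rank for polynomials of degree $k$ by the inductive hypothesis on $r$ (viewing it as a degenerate rational curve of degree $0$ plus $m$ lines plus general points in $\pp^{r-1}$).

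To combine these through the long exact sequence in cohomology, both ingredients must lie in the same regime---both exhibiting $H^0$-vanishings (the overdetermined case) or both $H^1$-vanishings (the underdetermined case)---matching the regime of $T$ itself. The key identity
\[\chi(\ii_T(k)) = \chi(\ii_{T_0}(k-1)) + \chi(\ii_{T \cap H}(k))\]
says that the total ``excess'' of $T$ splits between the two pieces, and a direct computation shows that each unit increase in $m$ transfers exactly $k$ units of excess from the residual to the hyperplane section. The hypothesis $n \leq k-1$ ensures that the admissible range of $m$ is nonempty, and for generic parameters one can pick $m$ in this range so that both excesses have the same sign as $\chi(\ii_T(k))$.

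The main obstacle is the discrete numerical balancing: in borderline cases where $|\chi(\ii_T(k))| < k$, the monotone step of size $k$ in $m$ may overshoot, and no admissible $m$ places both excesses simultaneously on the correct side of zero. Such cases are handled by a supplementary trick---for instance, adding or removing a single general point from $T$ shifts the total excess by one without affecting the conclusion (since the addition or removal of a general point preserves maximal rank in the relevant regime), which returns us to a regime where the $m$-adjustment does suffice. Executing this case-by-case balancing, together with the formal verification of the generalized inductive statement at the base case $r = 3$, constitutes the technical bulk of the proof.
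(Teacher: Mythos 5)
Your overall architecture (induction on $(r,k)$, residual exact sequence with respect to a hyperplane $H$, Hartshorne--Hirschowitz for $r=3$, augmenting the statement by general points) matches the paper's, but there is a genuine gap in the balancing step, and it is not the borderline issue you flag. With your specialization the residual is $T_0 = C \cup L_1 \cup \cdots \cup L_{n-m}$ and the hyperplane section is $m$ lines plus $d-m$ points, so
\[
\chi(\oo_{T_0}(k-1)) = (k-1)(d-n) + 1 + k(n-m), \qquad \chi(\oo_{T\cap H}(k)) = d + mk,
\]
and to conclude you need both quantities on the same side of their respective targets $\binom{r+k-1}{k-1}=\tfrac{k}{r+k}\binom{r+k}{k}$ and $\binom{r+k-1}{k}=\tfrac{r}{r+k}\binom{r+k}{k}$. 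Varying $m$ over its entire admissible range shifts each side by at most $k\min(n,k-2) < k^2$. But the initial imbalance can be of order $\binom{r+k}{k}$, not of order $k$: take $n=1$ and $d$ minimal with $kd+n+1 \ge \binom{r+k}{k}$, so $d \approx \tfrac1k\binom{r+k}{k}$ and $T$ sits in the injectivity regime. Then $\chi(\oo_{T\cap H}(k)) = d+mk \le d+k$, which for $r \gg k$ falls short of $\tfrac{r}{r+k}\binom{r+k}{k}$ by roughly $\bigl(\tfrac{r}{r+k}-\tfrac1k\bigr)\binom{r+k}{k}$, while the residual side exceeds its target by a comparably enormous amount. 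No admissible $m$ puts both on the same side, and adding or removing single general points cannot bridge a gap of this size. This is a failure on a positive-density set of parameters, not a discrete edge case.

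The missing idea is that the rational curve $C$ itself must be degenerated: the paper specializes $C$ to $C' \cup C''$ with $C''$ a rational curve of degree $d-n-d'$ \emph{inside} $H$ and $C'$ (a rational curve plus at most $k-2$ lines, total degree $d'$) transverse to $H$, while sending \emph{all} $n$ lines into $H$ (Propositions~\ref{hir-ell} and~\ref{prop-rat-ell}). The residual is then $C'$ alone, handled by the $(r,k-1)$ hypothesis, and the hyperplane section is a rational curve plus $n \le k-1$ lines plus $d'$ points in $\pp^{r-1}$, handled by the $(r-1,k)$ hypothesis. The parameter $d'$ moves the Euler characteristics in unit steps through essentially the full range $[0, (k-1)(d-n)]$, which is what makes the two sides simultaneously balanceable; the existence of a valid choice is exactly the content of the conditions $I_\ell$, $S_\ell$, $X_\ell$ and Lemma~\ref{lm:lines}. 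Without splitting $C$, your induction cannot close.
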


We argue by induction
on $r$ and $k$, using the result for $(r, k - 1)$ and $(r - 1, k)$
to prove the result for $(r, k)$; 
we note that the base case of $r = 3$ is a special case of
a result of Hartshorne and Hirschowitz \cite{lines3},
while the base case of $k = 1$ (which forces $n = 0$) is obvious.
We thus suppose $r \geq 4$ and $k \geq 2$ for our inductive argument.

\subsection{Degenerations \label{deg-lines}}

In this section, we outline the degenerations we shall use in
the proof of Theorem~\ref{thm:lines}.

\begin{defi}
We say that $r$, $k$, $d$, $n$, $d'$, and $n'$
\emph{satisfy $I_\ell(r, k, d, n, d', n')$}
(respectively \emph{satisfy $S_\ell(r, k, d, n, d', n')$})
if
\begin{align}
(k - 1) d' + 1 + n' &\geq \! (\text{respectively} \leq) \, \frac{k}{r + k} \cdot \binom{r + k}{k} \label{upstairs-ell} \\
kd + n - (k - 1)d' - n' &\geq \! (\text{respectively} \leq) \, \frac{r}{r + k} \cdot \binom{r + k}{k}. \label{downstairs-ell}
\end{align}
\end{defi}

\begin{prop} \label{hir-ell}
Let $r \geq 4$ and $H \subset \pp^r$ be a hyperplane.
Suppose that Theorem~\ref{thm:lines}
holds for $(r - 1, k)$ and $(r, k - 1)$.

Assume there exists a specialization of $C$ to $C^\circ = C' \cup C''$,
with $C''$ a general rational curve in $H$ and $C'$ transverse to $H$,
with $C' \cap C''$ general, and with $C'$ a general union of a rational curve
of degree $d' - n'$ with $n' \leq k - 2$ independently general lines.

If $r$, $k$, $d$, $n$, $d'$, and $n'$ satisfy
$I_\ell(r, k, d, n, d', n')$ or $S_\ell(r, k, d, n, d', n')$,
then Theorem~\ref{main} holds for $T$.
\end{prop}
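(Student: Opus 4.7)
The plan is to mirror the proof of Proposition~\ref{hir} in this simpler rational-curve-with-lines setting. The key specialization is to move all $n$ lines $L_1, \ldots, L_n$ into $H$ (where they can still be chosen independently general as lines in $H$), while specializing $C$ to the given $C^\circ = C' \cup C''$. Call the resulting scheme $T^\circ$; by openness of maximal rank, it suffices to prove maximal rank for $T^\circ$.

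The residuation sequence with respect to $H$,
\[
0 \to \ii_{B \subset \pp^r}(k - 1) \to \ii_{T^\circ \subset \pp^r}(k) \to \ii_{A \subset H}(k) \to 0,
\]
identifies $B = \mathrm{Res}_H T^\circ$ with $C'$, and $A = T^\circ \cap H$ with the union of $C''$, the $n$ specialized lines, and a set of $d' - n' - 1$ additional general points in $H$ coming from $(C' \cap H) \setminus C''$ (where the count $n' + 1 = |C' \cap C''|$ follows from the requirement that $C^\circ$ be of arithmetic genus $0$). A direct Euler-characteristic computation on these disjoint unions of rational curves, lines, and points gives
\[
\chi(\oo_B(k-1)) = (k-1)d' + 1 + n', \qquad \chi(\oo_A(k)) = kd + n - (k-1)d' - n',
\]
so \eqref{upstairs-ell} and \eqref{downstairs-ell} translate precisely into $\chi(\oo_B(k-1)) \geq \dim H^0(\oo_{\pp^r}(k-1))$ and $\chi(\oo_A(k)) \geq \dim H^0(\oo_H(k))$ in the $I_\ell$ case (and into the reverse inequalities in the $S_\ell$ case).

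Since $B$ is a general union of a rational curve with $n' \leq k - 2$ independently general lines in $\pp^r$, the inductive hypothesis Theorem~\ref{thm:lines} for $(r, k - 1)$ gives maximal rank for $B$ in degree $k - 1$; and since $A$, after removing the general points (which cannot affect maximal rank), is a general union of a rational curve with $n \leq k - 1$ independently general lines in $H \simeq \pp^{r-1}$, the inductive hypothesis for $(r - 1, k)$ gives maximal rank for $A$ in degree $k$. The $H^1$ of the relevant twisted structure sheaves vanishes on each component, so $H^0 = \chi$ in both cases; combined with the maximal-rank conclusions, this forces either $H^0(\ii_{B \subset \pp^r}(k-1)) = H^0(\ii_{A \subset H}(k)) = 0$ (in the $I_\ell$ case) or $H^1(\ii_{B \subset \pp^r}(k-1)) = H^1(\ii_{A \subset H}(k)) = 0$ (in the $S_\ell$ case). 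The long exact sequence attached to the residuation sequence then yields the corresponding vanishing for $\ii_{T^\circ \subset \pp^r}(k)$, which is precisely maximal rank for $T^\circ$ in degree $k$, hence for $T$.

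The main obstacle is the combinatorial bookkeeping: one must confirm that the joint specialization (of $C$ to $C' \cup C''$, and of each of the $n$ lines into $H$) can be carried out while preserving all the required generality --- transversality of $C'$ to $H$, generality of $C' \cap C''$, pairwise disjointness of the specialized lines from each other and from $C''$, and independence of the residual points $(C' \cap H) \setminus C''$ --- so that both $B$ and $A$ genuinely fit the hypotheses of Theorem~\ref{thm:lines} in $(r, k-1)$ and $(r-1, k)$ respectively. Once this generality is confirmed, the rest of the argument is the mechanical Euler-characteristic-plus-residuation calculation sketched above.
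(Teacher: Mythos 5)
Your proposal is correct and follows essentially the same route as the paper: specialize all $n$ lines to general lines in $H$, residuate with respect to $H$ so that $B = C'$ is handled by Theorem~\ref{thm:lines} for $(r, k-1)$ and $A = T^\circ \cap H$ (a rational curve with $n \leq k-1$ lines in $H$ plus general points) by Theorem~\ref{thm:lines} for $(r-1, k)$, and use the Euler-characteristic inequalities from $I_\ell$ or $S_\ell$ to force injectivity or surjectivity respectively. Your explicit verification that $\#(C' \cap C'') = n' + 1$ and the resulting $\chi$ computations match the paper's (tersely stated) dimension counts exactly.
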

\begin{proof}
Specialize each line $L_i$ to a general line $L_i^\circ \subset H$,
and write $T^\circ = C^\circ \cup L_1^\circ \cup L_2^\circ \cup \cdots \cup L_n^\circ$.
Then we have an exact sequence
\[0 \to \ii_{C' \subset \pp^r}(k - 1) \to \ii_{T^\circ \subset \pp^r}(k) \to \ii_{T^\circ \cap H \subset H}(k) \to 0.\]
Thus, to show $H^i(\ii_{T^\circ \subset \pp^r}(k)) = 0$,
it suffices to show $H^i(\ii_{C' \subset \pp^r}(k - 1)) = H^i(\ii_{T^\circ \cap H \subset H}(k)) = 0$,
or equivalently that
the restriction maps
\[H^0(\oo_H(k)) \to H^0(\oo_{T^\circ \cap H}(k)) \tand H^0(\oo_{\pp^r}(k - 1)) \to H^0(\oo_{C'}(k - 1))\]
are either both injective or both surjective.
The condition
$I_\ell(r, k, d, n, d', n')$, respectively $S_\ell(r, k, d, n, d', n')$,
implies
\[\dim H^0(\oo_{C'}(k)) \geq \dim H^0(\oo_{\pp^r}(k - 1)) \tand \dim H^0(\oo_{T^\circ \cap H}(k)) \geq \dim H^0(\oo_H(k)),\]
respectively
\[\dim H^0(\oo_{C'}(k)) \geq \dim H^0(\oo_{\pp^r}(k - 1)) \tand \dim H^0(\oo_{T^\circ \cap H}(k)) \geq \dim H^0(\oo_H(k)).\]

It thus remains to note that by our inductive hypothesis,
$C' \subset \pp^r$
satisfies maximal rank for polynomials of degree $k - 1$;
moreover,
$T^\circ \cap H \subset H$
is the union
of a subscheme which satisfies maximal rank for polynomials of degree $k$
with independently general points, and so satisfies maximal rank for polynomials of degree $k$ itself.
\end{proof}

\begin{defi}
We say that $r$, $d$, $n$, $d'$, $n'$, and $t$
\emph{satisfy $X_\ell(r, d, n, d', n', t)$} if
\begin{align}
n' &\geq 0 \label{aell}\\
t - n' &\geq 0 \label{bell}\\
d' - n' - 1 &\geq 0 \label{cell} \\
d - n - d' - 1 &\geq 0 \label{dell} \\
d - n - d' - n' &\geq 0. \label{eell}
\end{align}
\end{defi}

\begin{prop} \label{prop-rat-ell}
Let $r \geq 4$ and
$d$, $n$, $d'$, $n'$, and $t$ be integers
which satisfy $X_\ell(r, d, n, d', n', t)$.

Then there exists a rational curve $C' \cup C'' \subset \pp^r$
of degree $d - n$,
with $C''$ a general rational curve in a hyperplane $H$, and $C'$ a general
union of a rational curve of degree $d' - n'$ and $n' \leq t$ general lines,
all transverse to $H$.
\end{prop}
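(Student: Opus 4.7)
The plan is to construct $C' \cup C''$ as a tree of smooth rational components whose dual graph is a star with $C''$ at the center, in close parallel with the proof of Proposition~\ref{prop-rat}.

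First, since \eqref{dell} gives $d - n - d' \geq 1$, I would let $C'' \subset H$ be a general rational curve of degree $d - n - d'$, and arrange by interpolation that $C''$ passes through a set $\Gamma = \{q_0, q_1, \ldots, q_{n'}\}$ of $n' + 1$ general points of $H$. The normal bundle of a general rational curve of degree $e$ in $\pp^{r-1}$ is balanced (Sacchiero's theorem), and a direct count shows that \eqref{eell} (namely $d - n - d' \geq n'$) is exactly what is needed for the vanishing $H^1(N_{C''/H}(-\Gamma)) = 0$, so $C''$ can be deformed to pass through $\Gamma$.

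Next, by \eqref{cell}, $d' - n' \geq 1$, so I can take $R \subset \pp^r$ to be a general rational curve of degree $d' - n'$, transverse to $H$, with $q_0 \in R \cap H$; and for $i = 1, \ldots, n'$, take $L_i$ to be a general line through $q_i$, transverse to $H$. This uses \eqref{bell} (i.e.\ $n' \leq t$) to bound the number of lines. Setting $C' = R \cup L_1 \cup \cdots \cup L_{n'}$, Corollary~1.5 of \cite{tan} (applied to the union of rational components) shows that $C' \cap H$ is a general set of $d'$ points of $H$, containing $\Gamma$ as a subset; in particular, $C' \cap C'' = \Gamma$.

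The dual graph of $C' \cup C''$ is a star with $C''$ at the center and $R, L_1, \ldots, L_{n'}$ as the $n' + 1$ arms, hence a tree of smooth rational components. Thus $C' \cup C''$ has arithmetic genus $0$ and total degree $(d - n - d') + (d' - n') + n' = d - n$, as required. I expect the main obstacle to be the interpolation step for $C''$: balancedness of the normal bundle is cleanest for nondegenerate rational curves, so the borderline case $d - n - d' < r - 1$ (where $C''$ is forced to be degenerate) must be handled separately, by first choosing a general linear subspace $\pp^{d - n - d'} \subset H$ containing $\Gamma$ --- such a subspace exists because \eqref{eell} gives $d - n - d' \geq n' \geq \dim \langle \Gamma \rangle$ --- and then taking $C''$ to be a rational normal curve inside this subspace through $\Gamma$, for which the balancing $N_{C''} \simeq \oo_{\pp^1}(d-n-d'+2)^{\oplus (d-n-d'-1)}$ suffices.
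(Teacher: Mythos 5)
Your construction is essentially identical to the paper's: you take $C''$ to be a general rational curve of degree $d-n-d'$ in $H$ passing through a set $\Gamma$ of $n'+1$ general points (justified by \eqref{dell} and \eqref{eell}), and $C'$ to be the degree-$(d'-n')$ rational curve plus $n'$ lines, each component attached to exactly one point of $\Gamma$ (justified by \eqref{cell} and Corollary~1.5 of~\cite{tan}), yielding a tree of rational components of arithmetic genus $0$ and degree $d-n$. The one point the paper records that you omit is the smoothability of this nodal curve (via Corollary~1.11 of~\cite{rbn}), which matters because the proposition is invoked to produce a \emph{specialization} of a general rational curve of degree $d-n$; your observation that the dual graph is a tree is the key input, but the conclusion should be stated.
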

\begin{proof}
Write $d'' = d - n - d'$; note that adding \eqref{aell} and \eqref{cell} gives $d' \geq 1$,
and \eqref{dell} gives $d'' \geq 1$.
We may therefore let $C'' \subset H$ be a general rational curve
of degree $d''$.
By \eqref{eell}, which upon rearrangement becomes $n' + 1 \leq d'' + 1$,
we have that $C''$ passes through a set $\Gamma$ of $n' + 1$
general points in $H$.
Since $d' \geq n' + 1$ by \eqref{cell},
and the hyperplane section of a general union of rational curves of total
degree $d'$
is a general set of $d'$ points by Corollary~1.5 of~\cite{tan},
we may thus let $C'$ be a general union of a rational curve of degree $d' - n'$
and $n' \leq t$ general lines,
passing through $\Gamma$,
such that every component of $C'$ meets $\Gamma$.

By construction, $C' \cup C''$ is a rational curve of degree $d' + d'' = d$.
(c.f.\ Corollary~1.11 of~\cite{rbn} for its smoothability).
\end{proof}

In Appendix~\ref{app:inequalities}, we include code in {\sc sage}
to check or create algebraic expressions for all inequalities that appear in this section.

\subsection{Curves of Extreme Degree}

As in Section~\ref{extp}, we first deal with the
``easy'' cases of Theorem~\ref{thm:lines};
i.e.\ with those cases where the 
restriction map is ``far'' from being an isomorphism.

\begin{defi}
We say that integers $r$, $k$, $d$, and $n$ \emph{satisfy $U_\ell(r, k, d, n)$} if
\begin{align*}
n &\geq 0 \\
k - 1 - n &\geq 0 \\
(k - 1)(d - n) - \frac{k}{r + k} \cdot \binom{r + k}{k} &\geq 0 \\
\frac{r}{r + k} \cdot \binom{r + k}{k} - d - kn - 1 &\geq 0.
\end{align*}
\end{defi}

In this section, we show it suffices to verify Theorem~\ref{thm:lines} when
$r$, $k$, $d$, and $n$ satisfy $U_\ell(r, k, d, n)$.

To do this, we take $C^\circ = C$ in Proposition~\ref{hir-ell}, which reduces our problem to showing
that $r$, $k$, $d$, and $n$ satisfy $I_\ell(r, k, d, n, d - n, 0)$ or
$S_\ell(r, k, d, n, d - n, 0)$
provided that
\[d + kn \geq \frac{r}{r + k} \cdot \binom{r + k}{k} \tor (k - 1)(d - n) + 1 \leq \frac{k}{r + k} \cdot \binom{r + k}{k}.\]
Or equivalently, we need to show that
\[d + kn > \frac{r}{r + k} \cdot \binom{r + k}{k} \imp (k - 1)(d - n) + 1 \geq \frac{k}{r + k} \cdot \binom{r + k}{k}.\]
To see this, we simply note that $d + kn \geq \frac{r}{r + k} \cdot \binom{r + k}{k} + 1$ implies,
by adding $k + 1$ times our assumption that $n \leq k - 1$ and rearranging, that
\[(k - 1)(d - n) + 1 \geq \frac{k}{r + k} \cdot \binom{r + k}{k} + \frac{(rk - r - k) \cdot \binom{r + k}{k} - (r + k)(k^3 - k^2 - 2k + 1)}{r + k}.\]
It thus remains to see that for $k \geq 2$ and $r \geq 4$,
\[\binom{r + k}{k} \geq \frac{(r + k)(k^3 - k^2 - 2k + 1)}{rk - r - k}.\]
This is straight-forward for $k = 2$ and for $k = 3$; if $k \geq 4$, then
$\binom{r + k}{k} \geq \binom{r + k}{4}$, and so it suffices to see
\[\binom{r + k}{4} \geq \frac{(r + k)(k^3 - k^2 - 2k + 1)}{rk - r - k},\]
which holds because the difference is equal to
\begin{multline*}
\frac{r + k}{24(rk - r - k)} \cdot \Big((r - 4)^4(k - 4) + 3(r - 4)^3(k - 4)^2 + 3(r - 4)^2(k - 4)^3 + (r - 4)(k - 4)^4 \\
+ 3(r - 4)^4 + 30(r - 4)^3(k - 4) + 54(r - 4)^2(k - 4)^2 + 30(r - 4)(k - 4)^3 + 3(k - 4)^4 \\
+ 62(r - 4)^3 + 293(r - 4)^2(k - 4) + 293(r - 4)(k - 4)^2 + 38(k - 4)^3 + 465(r - 4)^2 \\
+ 1140(r - 4)(k - 4) + 201(k - 4)^2 + 1486(r - 4) + 574(k - 4) + 696 \Big) \geq 0.
\end{multline*}

\subsection{The Inductive Argument \label{inductive-lines}}

To prove Theorem~\ref{thm:lines}, we combine Propositions~\ref{hir-ell}
and \ref{prop-rat-ell} (with $t = k - 2$) to conclude that it suffices to check:

\begin{lm} \label{lm:lines}
Let $r \geq 4$ and $k \geq 2$, and $d$, $g$, and $n$ be integers satisfying
$U_\ell(r, k, d, g, h)$.
Then there exist integers $d'$ and $n'$ satisfying
\begin{itemize}
\item $X_\ell(r, d, n, d', n', k - 2)$ and $I_\ell(r, k, d, n, d', n')$; or
\item $X_\ell(r, d, n, d', n', k - 2)$ and $S_\ell(r, k, d, n, d', n')$.
\end{itemize}
\end{lm}
\begin{proof}
This will be deferred to Appendix~\ref{code:lines}.
\end{proof}

\section{Space Curves \boldmath ($n = 0$) \label{app:space}}

In this appendix, we give an alternative proof of Ballico and Ellia's theorem in
\cite{ball}:

\begin{thm} \label{thm:space}
Let $C \subset \pp^3$ be a general BN-curve of degree $d$ and genus $g$.
Then $C$ satisfies maximal rank for polynomials of any degree $k$.
\end{thm}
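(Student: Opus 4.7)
The plan is to argue by induction on $k$. The cases $k = 1$ and $k = 2$ are already handled in the introduction: $k = 1$ follows from the Brill--Noether count forcing $\dim H^0(\oo_C(1)) = 4$, and $k = 2$ from the Gieseker--Petri theorem. So I focus on the inductive step for $k \geq 3$, assuming Theorem~\ref{thm:space} for $k - 1$. Since we may reduce to the BN-curve case (a degenerate rational curve lies in a plane, where quadrics already cut it out), I assume $C$ is a genuine BN-curve with $\rho(d, g, 3) \geq 0$.

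For the inductive step, the key move is to degenerate $C$ to a nodal interior BN-curve $C^\circ = C' \cup C''$, where $\Lambda \subset \pp^3$ is a fixed plane, $C'' \subset \Lambda$ is a general plane BN-curve of degree $d''$ and genus $g''$, and $C' \subset \pp^3$ is a general space BN-curve of degree $d'$ and genus $g'$ transverse to $\Lambda$, meeting $C''$ at a set $\Gamma$ of $n = g - g' - g'' + 1$ general points in $\Lambda$. Existence of such a specialization as an interior BN-curve is guaranteed by Theorem~1.6 of \cite{rbn}, provided $(d', g', d'', g'', n)$ satisfy $d = d' + d''$, $\rho(d', g', 3) \geq 0$, $\rho(d'', g'', 2) \geq 0$, and $n \geq 1$.

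The hyperplane restriction exact sequence
\[0 \to \ii_{C' \subset \pp^3}(k-1) \to \ii_{C^\circ \subset \pp^3}(k) \to \ii_{(C^\circ \cap \Lambda) \subset \Lambda}(k) \to 0\]
then reduces maximal rank for $C^\circ$ in degree $k$ to the conjunction of: (i) $C' \subset \pp^3$ has maximal rank for polynomials of degree $k - 1$, which is the inductive hypothesis; and (ii) $C^\circ \cap \Lambda = C'' \cup (C' \cap \Lambda) \subset \Lambda$ has maximal rank for polynomials of degree $k$. Since $C' \cap \Lambda$ is a set of $d'$ general points in $\Lambda$ (e.g.\ by Corollary~1.5 of \cite{tan}), claim (ii) reduces to the classical fact that the union of a general plane BN-curve with independently general points satisfies maximal rank in $\pp^2$. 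To close the argument we need to choose $(d', g')$ so that the Euler characteristic inequalities
\[(k-1) d' + 1 - g' \geq \binom{k + 2}{3} \tand kd - g + 1 - (k - 1) d' + g' \geq \binom{k + 2}{2}\]
both hold (in the injectivity subcase; in the surjectivity subcase, one reverses these and uses that $H^1(\oo_{\pp^3}(k-1)) = H^1(\oo_\Lambda(k)) = 0$).

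The main obstacle is the combinatorial existence problem: producing integers $(d', g', d'', g'', n)$ satisfying simultaneously the two BN inequalities, the arithmetic $d = d' + d''$ and $g = g' + g'' + n - 1$, the interior-curve condition $n \geq 1$, and the two Euler characteristic bounds above. The generic regime, where $kd + 1 - g$ is comfortably strictly between $0$ and $\binom{k + 3}{3}$, should admit a clean explicit choice (for instance, scaling $d' \approx \binom{k + 2}{3}/(k - 1)$ and $g'$ near the BN boundary). The delicate cases will be the boundary regimes: when $g$ is close to its Brill--Noether maximum $\lfloor (4d - 12)/3 \rfloor$, when $d$ is small relative to $k$, and when $kd + 1 - g$ is very close to $\binom{k + 3}{3}$ so that neither $H^0$ nor $H^1$ vanishing has much slack. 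These will likely require ad hoc degenerations paralleling the treatments in Sections~\ref{extp} and~\ref{sec:quadrics}, including specialization onto quadric or cubic surfaces in $\pp^3$ in the style of the $(d, g) = (8, 6)$, $k = 4$ argument in Section~\ref{sec:space}.
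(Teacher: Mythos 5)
There is a genuine gap at the heart of your inductive step: the degeneration of $C$ onto a space curve $C'$ union a \emph{plane} BN-curve $C''\subset\Lambda$ does not work as described. A BN-curve in $\pp^2$ with $\rho(d'',g'',2)\geq 0$ and $g''<\binom{d''-1}{2}$ is not an embedding; its image is a nodal plane curve of arithmetic genus $\binom{d''-1}{2}$, so the flat limit of $C$ under your specialization acquires embedded points at the nodes, and your Euler-characteristic bookkeeping ($\chi(\oo_{C^\circ\cap\Lambda}(k))=kd''+1-g''+d'$, and the two displayed inequalities) is computed with the wrong genus. Worse, in the surjectivity direction, maximal rank for the \emph{reduced} image $C''\cup(C'\cap\Lambda)$ does not imply $H^1(\ii_{C_0}(k))=0$ for the actual flat limit $C_0$, which is what semicontinuity requires. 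If instead you insist that $C''$ be a smooth plane curve, you are forced into $g''=\binom{d''-1}{2}$, which is far too rigid to solve your system of constraints for general $(d,g,k)$. This is precisely why the paper's Appendix~\ref{app:space} does not degenerate onto a plane: it degenerates onto a smooth quadric $Q\simeq\pp^1\times\pp^1$, takes $C''$ to be a \emph{smooth} curve of bidegree $(1,b)$ or $(2,b)$ on $Q$ (Propositions~\ref{prop-rat-3} and~\ref{prop2-3}), and correspondingly inducts from $k$ down to $k-2$ via the sequence $0\to\ii_{C'}(k-2)\to\ii_{C^\circ}(k)\to\ii_{C^\circ\cap Q\subset Q}(k,k)\to 0$. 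The plane section is used only in the ``extreme degree'' cases, and there only through $C\cap H$ (a set of $d$ general points), never through a plane curve component. Your instinct that the boundary regimes need surface specializations is right, but the quadric is not an ad hoc patch for hard cases --- it is the main engine, and without it (or an equivalent replacement for the plane-curve component) the argument does not close.

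A secondary but real problem: your base case $k=2$ is not covered by the Gieseker--Petri argument in the introduction, which only computes $\dim H^0(\oo_C(2))$ (nonspeciality of $\oo_C(2)$), not the rank of the restriction map $H^0(\oo_{\pp^3}(2))\to H^0(\oo_C(2))$. Maximal rank for quadrics is itself a nontrivial theorem; the paper's induction in Appendix~\ref{app:space} starts from $k\leq 1$ and handles $k=2$ inside the inductive step (where $k-2=0$).
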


As in \cite{ball}, our argument will be by induction on $k$.
The base cases of $k = 0$ and $k = 1$ are clear: Since $C$ is nonempty, respectively nondegenerate, the restriction map is injective.
We therefore suppose $k \geq 2$ for the remainder of this section.
As in \cite{ball}, we will use an inductive argument specializing $C$ to a reducible curve $C' \cup C''$
with $C''$ lying on a smooth quadric $Q \simeq \pp^1 \times \pp^1$.

\subsection{Degenerations \label{deg-space}}

In this section, we outline the degenerations we shall use in the proof of Theorem~\ref{thm:space}.

\begin{defi}
We say that $k$, $d$, $g$, $d'$, and $g'$
\emph{satisfy $I_3(k, d, g, d', g')$} (respectively
\emph{satisfy $S_3(k, d, g, d', g')$}) if
\begin{align}
(k - 2) d' + 1 - g' &\geq \! (\text{respectively} \leq) \ \binom{k + 1}{3} \label{upstairs3}\\
kd - g - (k - 2)d' + g' &\geq \! (\text{respectively} \leq) \ (k + 1)^2. \label{downstairs3}
\end{align}
\end{defi}

\begin{prop} \label{hir3}
Let $Q \subset \pp^3$ be a smooth quadric, and
suppose that Theorem~\ref{thm:space} holds for polynomials of degree $k - 2$.

Assume there exists
a specialization $C^\circ = C' \cup C''$ of $C$,
with $C''$ contained in $Q$ and $C'$ transverse to $Q$,
with $C' \cap Q$ (and thus $C' \cap C''$) a general set of points on $Q$,
with $C'$ either a general BN-curve
or the union of a general rational curve with independently general disjoint lines,
and with $C'$ of
degree $d'$ and genus $g'$.

If $k$, $d$, $g$, $d'$, and $g'$ satisfy
either
\[I_3(k, d, g, d', g') \tor S_3(k, d, g, d', g'),\]
then Theorem~\ref{thm:space} holds for $C$.
\end{prop}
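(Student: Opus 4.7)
The plan mirrors Proposition~\ref{hir}, with the smooth quadric $Q$ playing the role of a hyperplane. Since the condition of satisfying maximal rank is open, it suffices to verify the conclusion on the specialization $C^\circ = C' \cup C''$. Because $C'' \subset Q$ and $C'$ meets $Q$ transversely, the residual of $C^\circ$ along $Q$ is $C'$, and there is a residuation exact sequence
\[0 \to \ii_{C' \subset \pp^3}(k-2) \to \ii_{C^\circ \subset \pp^3}(k) \to \ii_{C^\circ \cap Q \subset Q}(k,k) \to 0.\]
Passing to the long exact sequence in cohomology, to prove injectivity (the $I_3$ case) it suffices to show the vanishing of $H^0$ of both outer terms, while to prove surjectivity (the $S_3$ case) it suffices to show the vanishing of $H^1$ of both outer terms.

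For the term $\ii_{C' \subset \pp^3}(k-2)$, Riemann--Roch gives $\chi(\oo_{C'}(k-2)) = (k-2)d' + 1 - g'$, which by \eqref{upstairs3} (respectively its reverse, in the $S_3$ case) is at least (resp.\ at most) $\binom{k+1}{3} = \dim H^0(\oo_{\pp^3}(k-2))$. Since $C'$ is either a general BN-curve --- to which the inductive hypothesis of Theorem~\ref{thm:space} at degree $k-2$ applies --- or a general union of a rational curve with independently general disjoint lines --- to which Theorem~\ref{thm:lines} at degree $k-2$ applies --- the curve $C'$ satisfies maximal rank for polynomials of degree $k-2$, yielding the required $H^0$ or $H^1$ vanishing.

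For the term $\ii_{C^\circ \cap Q \subset Q}(k,k)$, additivity of Euler characteristic in the residuation sequence, together with $\chi(\oo_{C^\circ}(k)) = kd + 1 - g$, gives $\chi(\oo_{C^\circ \cap Q}(k,k)) = kd - g - (k-2)d' + g'$, which by \eqref{downstairs3} compares the correct way with $\dim H^0(\oo_Q(k,k)) = (k+1)^2$. Writing $C^\circ \cap Q = C'' \cup (C' \cap Q)$ with $C' \cap Q$ a general set of $2d'$ points on $Q$, I would observe that adjoining or removing general points preserves the relevant cohomological vanishing once the $\chi$ inequality has been secured, reducing the problem to the maximal rank of the single curve $C'' \subset Q \cong \pp^1 \times \pp^1$ with respect to $\oo_Q(k,k)$. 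Because $C''$ has some bidegree $(a,b)$, the sheaf $\ii_{C'' \subset Q}(k,k)$ is the line bundle $\oo_Q(k - a, k - b)$, whose cohomology is completely explicit via K\"unneth.

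The main obstacle is thus the combinatorial step: choosing the bidegree $(a,b)$ of $C''$ compatibly with its prescribed degree $d - d'$ and genus $g - g' + (\#(C' \cap C'') - 1)$, and then matching this choice to the requirement that $C' \cup C''$ actually deforms to the general BN-curve $C$ of degree $d$ and genus $g$. Once such a bidegree is fixed, the K\"unneth computation of $H^i(\oo_Q(k-a, k-b))$, combined with the fact that $C' \cap Q$ is a general set of $2d'$ points on $Q$ (so that it imposes independent conditions up to the $\chi$ bound), completes the argument in both the $I_3$ and $S_3$ regimes.
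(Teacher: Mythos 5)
Your skeleton is the same as the paper's: the residuation exact sequence off $Q$, maximal rank of $C'$ in degree $k-2$ by the inductive hypothesis (or the disjoint-lines result), and general points on $Q \simeq \pp^1 \times \pp^1$ imposing independent conditions on $\oo_Q(k-a,k-b)$ after peeling off the divisor $C''$. The injectivity ($I_3$) half of your argument is essentially complete. But the surjectivity ($S_3$) half has two genuine gaps, and they are where the paper does its real work.

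First, for the term $\ii_{C' \subset \pp^3}(k-2)$: the inequality $\chi(\oo_{C'}(k-2)) \leq \binom{k+1}{3}$ does \emph{not} give $\dim H^0(\oo_{C'}(k-2)) \leq \dim H^0(\oo_{\pp^3}(k-2))$, because $\dim H^0 = \chi + \dim H^1$ and $\oo_{C'}(k-2)$ may be special (indeed for $k=2$ it is $\oo_{C'}$ itself, and for $k=3$ it is $\oo_{C'}(1)$). Maximal rank plus the wrong-way comparison of dimensions does not yield surjectivity. The paper closes this by a case analysis: for $k=2$ the inequality \eqref{upstairs3} forces $g' \geq 0$ so $C'$ is connected and $h^0(\oo_{C'}) = 1$; for $k=3$ it forces $d' \leq g'+3$ so $C'$ is linearly normal and $h^0(\oo_{C'}(1)) = 4$; for $k \geq 4$ the bundle is nonspecial. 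Second, for the term on $Q$: you cannot choose the bidegree $(a,b)$ of $C''$ --- the specialization is a \emph{hypothesis} of the proposition (it is constructed elsewhere, in Propositions~\ref{prop-rat-3} and~\ref{prop2-3}), so the proof must handle whatever $(a,b)$ occurs. The K\"unneth computation then shows that $H^1(\oo_Q(k-a,k-b)) \neq 0$ whenever, say, $a \geq k+2$ and $b \leq k$, so the vanishing of $H^1(\ii_{\Gamma}(k-a,k-b))$ is not automatic from the count $n \leq (k+1-a)(k+1-b)$. The paper rules out the bad bidegrees by combining $S_3$-but-not-$I_3$ with the connectedness of $C^\circ$ and the Brill--Noether inequality $4d - 3g - 12 \geq 0$ to deduce $(a-1)(b-1) \leq k^2$, hence WLOG $a \leq k+1$, and then treats $b \geq k+1$ (which forces $n = 0$ and $a = k+1$ or $b = k+1$) separately from $b \leq k+1$. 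Neither of these steps appears in your proposal, and without them the surjectivity direction does not go through.
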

\begin{proof}
We have an exact sequence
\[0 \to \ii_{C' \subset \pp^3}(k - 2) \to \ii_{C^\circ \subset \pp^3}(k) \to \ii_{C^\circ \cap Q \subset Q \simeq \pp^1 \times \pp^1}(k, k) \to 0.\]
Thus, to show $H^i(\ii_{C^\circ \subset \pp^3}(k)) = 0$,
it suffices to show $H^i(\ii_{C' \subset \pp^3}(k - 2)) = H^i(\ii_{C^\circ \cap Q \subset Q}(k)) = 0$.

Write $(a, b)$ for the bidegree of $C'' \subset Q \simeq \pp^1 \times \pp^1$.
Write $\Gamma = C^\circ \cap Q \smallsetminus C''$, so that
$\ii_{C^\circ \cap Q \subset Q}(k) \simeq \ii_{\Gamma \subset \pp^1 \times \pp^1}(k - a, k - b)$.
Writing $n = \# \Gamma$, we note that \eqref{downstairs3} becomes upon rearrangement
\[n \geq \! (\text{respectively} \leq) \, (k + 1 - a)(k + 1 - b).\]

If $k$, $d$, $g$, $d'$, and $g'$ satisfy
$I_3(k, d, g, d', g')$,
then since 
general points impose independent conditions
on sections of any line bundle, and
\[\dim H^0(\oo_{\pp^1 \times \pp^1}(k - a, k - b)) \leq \max(0, (k + 1 - a)(k + 1 - b)),\]
we conclude that $H^0(\ii_{\Gamma \subset \pp^1 \times \pp^1}(k - a, k - b)) = 0$.
Moreover, the vanishing of $H^0(\ii_{C' \subset \pp^r}(k - 2))$ is equivalent to the injectivity
of the restriction map
\[H^0(\oo_{\pp^3}(k - 2)) \to H^0(\oo_{C'}(k - 2)),\]
which is of maximal rank by our inductive hypothesis
(if $C'$ is a BN-curve)
or \cite{lines3}
(if $C'$ is the union of a general rational curve with independently general disjoint lines).
It thus remains to note that \eqref{upstairs3} implies
\[\chi(\oo_{C'}(k - 2)) = (k - 2) d' + 1 - g' \geq \binom{k + 1}{3} = \dim H^0(\oo_{\pp^3}(k - 2)),\]
and so 
$\dim H^0(\oo_{C'}(k - 2)) \geq \chi(\oo_{C'}(k - 2)) \geq \dim H^0(\oo_{\pp^3}(k - 2))$
as desired.

We now suppose $k$, $d$, $g$, $d'$, and $g'$ satisfy $S_3(k, d, g, d', g')$,
but not $I_3(k, d, g, d', g')$. Adding together \eqref{upstairs3} and \eqref{downstairs3},
one of which must be strict,
we obtain
\begin{equation} \label{from-s3}
kd + 1 - g \leq \binom{k + 1}{3} + (k + 1)^2 - 1.
\end{equation}
Since $C^\circ$ must be connected, we have
\begin{equation} \label{conn}
2d' - n \geq 1 - g'.
\end{equation}
Finally, since $C$ is a BN-curve, we have
\begin{equation} \label{rho}
4d - 3g - 12 \geq 0.
\end{equation}
Adding together $4$ times \eqref{from-s3} plus
$3k - 4$ times \eqref{conn} plus $k$ times \eqref{rho},
and using $d = a + b + d'$ and $g = ab - a - b + 2d' + g' - n$,
we obtain upon rearrangement
\[(a - 1)(b - 1) \leq \frac{2k^3 + 12k^2 - 14k - 12}{9k - 12} \leq k^2.\]
We may therefore suppose without loss of generality that
\begin{equation} \label{a-bound}
a \leq k + 1.
\end{equation}

By separately considering the cases $k = 2$ (for which \eqref{upstairs3} implies $g' \geq 0$ and thus $C'$ is connected),
$k = 3$ (for which \eqref{upstairs3} implies $d' \leq g' + 3$ and thus $C'$ is linearly normal),
and $k \geq 4$ (for which $\oo_{C'}(k - 2)$ is nonspecial),
we conclude
\[\dim H^0(\oo_{C'}(k - 2)) \leq \dim H^0(\oo_{\pp^3}(k - 2)).\]
Since $C' \subset \pp^3$ satisfies maximal rank for polynomials of degree $k - 2$
by our inductive hypothesis
(if $C'$ is a BN-curve)
or \cite{lines3}
(if $C'$ is the union of a general rational curve with independently general disjoint lines),
this implies the restriction map
$H^0(\oo_{\pp^3}(k - 2)) \to \dim H^0(\oo_{C'}(k - 2))$ is surjective, and therefore that $H^1(\ii_{C' \subset \pp^3}(k - 2)) = 0$.

It thus remains to show $H^1(\ii_{\Gamma \subset \pp^1 \times \pp^1}(k - a, k - b)) = 0$,
for which we will use \eqref{a-bound}.

If $b \geq k + 1$,
then since $n \leq (k + 1 - a)(k + 1 - b) \leq 0$ by assumption,
we must have $n = 0$ and either $a = k + 1$ or $b = k + 1$.
It follows that
\[H^1(\ii_{\Gamma \subset \pp^1 \times \pp^1}(k - a, k - b)) = H^1(\oo_{\pp^1 \times \pp^1}(k - a, k - b)) = 0,\]
as desired. On the other hand, if $b \leq k + 1$,
then $H^1(\oo_{\pp^1 \times \pp^1}(k - a, k - b)) = 0$, so
the vanishing of $H^1(\ii_{\Gamma \subset \pp^1 \times \pp^1}(k - a, k - b))$ is equivalent to the surjectivity
of the restriction map
\[H^0(\oo_{\pp^1 \times \pp^1}(k - a, k - b)) \to H^0(\oo_\Gamma(k - a, k - b)),\]
which holds since general points impose independent conditions on sections of any line bundle
and $n \leq (k + 1 - a)(k + 1 - b) = \dim H^0(\oo_{\pp^1 \times \pp^1}(k - a, k - b))$ by assumption.
\end{proof}

\begin{defi}
We define the set
\[E_3 \colonequals \{(4, 1), (5, 2), (6, 2), (6, 4), (7, 5), (8, 6)\}.\]
\end{defi}

\begin{defi}
We say that $d$, $g$, $d'$, and $g'$
\emph{satisfy $X_3(d, g, d', g')$} if
\begin{align}
d' + g' - 3 &\geq 0 \label{a3} \\
4 d' - 3 g' - 12 &\geq 0 \label{b3} \\
d - d' - 1 &\geq 0 \label{c3} \\
g - g' &\geq 0 \label{d3} \\
2 d - 2 d' - g + g' - 2 &\geq 0 \label{e3}\\
2 d' - g + g' - 1 &\geq 0. \label{f3}
\end{align}
\end{defi}

\begin{prop} \label{prop-rat-3}
Let $Q \subset \pp^3$ be a smooth quadric, and
$d$, $g$, $d'$, and $g'$ be integers which satisfy
\begin{align*}
g &\geq 0 \\
4 d - 3g - 12 &\geq 0 \\
(d', g') &\notin E_3,
\end{align*}
and $X_3(d, g, d', g')$.

Then there exists a BN-curve $C' \cup C'' \subset \pp^3$
of degree $d$ and genus $g$,
with $C''$ contained in $Q$ and $C'$ transverse to $Q$,
with $C' \cap Q$ a general set of points on $Q$,
with $C'$ either a general BN-curve
or the union of a general rational curve with independently general disjoint lines,
and with $C'$ of
degree $d'$ and genus $g'$.
\end{prop}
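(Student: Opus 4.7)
The plan is to build $C'$ and $C''$ separately --- taking $C'$ as a general BN-curve (or, when $g' \leq 0$, as the union of a general rational curve of degree $d' + g'$ with $-g'$ disjoint general lines) and $C''$ as a rational curve on $Q$ of bidegree $(1, d - d' - 1)$ --- glue them at a general subset of $C' \cap Q$, and conclude via Theorem~1.6 of \cite{rbn} that the result lies in the Brill--Noether component.

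For the construction of $C'$: inequality \eqref{b3} gives $\rho(d', g', 3) \geq 0$, and \eqref{a3} gives nondegeneracy. When $(d', g') \notin E_3$, a suitable choice (either a genuine BN-curve or the rational-plus-lines alternative for $g' \leq 0$) can be arranged so that $C' \cap Q$ is a general set of $2d'$ points on $Q \simeq \pp^1 \times \pp^1$. The pairs in $E_3$ (such as $(4, 1)$, where elliptic quartics lie on pencils of quadrics) are exactly those where every BN-curve with those invariants is forced to meet a general quadric in a constrained, non-general configuration, which is why they must be excluded from the hypothesis.

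For the construction of $C''$: set $n = g - g' + 1$, which by \eqref{d3} satisfies $n \geq 1$, and by \eqref{f3} satisfies $n \leq 2d'$. Inequality \eqref{c3} permits bidegree $(1, d - d' - 1)$, and the linear system $|\oo_Q(1, d - d' - 1)|$ has projective dimension $2(d - d') - 1 \geq n - 1$ by \eqref{e3}. Thus for any subset $\Delta \subset C' \cap Q$ of size $n$, a Bertini argument produces a smooth rational $C'' \in |\oo_Q(1, d - d' - 1)|$ passing through $\Delta$ and meeting $C'$ transversely only at $\Delta$; by construction $C' \cup C''$ has degree $d' + (d - d') = d$ and arithmetic genus $g' + 0 + (n - 1) = g$.

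To conclude $C' \cup C''$ is a BN-curve, both $C'$ and $C''$ are BN-curves in $\pp^3$ individually, and $\Delta$ is a set of points that are general on each component (following from the generality of $C' \cap Q$ on $Q$ together with uniform position on each of $C'$ and $C''$); Theorem~1.6 of \cite{rbn} then places $C' \cup C''$ in the Brill--Noether component. The hardest step will be the first: justifying that $C' \cap Q$ can be arranged to be general on $Q$ for every $(d', g') \notin E_3$, which requires a case analysis switching between the pure BN-curve description and the rational-plus-lines alternative, together with a modest amount of classical projective geometry to identify the exceptions listed in $E_3$ and verify that no others arise.
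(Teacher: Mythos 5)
Your construction is the same as the paper's: the identical splitting into $C'$ (a general BN-curve, or a general rational curve of degree $d' + g'$ plus $-g'$ general lines when $g' \leq 0$) and $C''$ of bidegree $(1, d - d' - 1)$ on $Q$, glued at $n = g - g' + 1$ points, with each of \eqref{a3}--\eqref{f3} playing the role you assign it. One remark on your ``hardest step'': the generality of $C' \cap Q$ for $(d', g') \notin E_3$ does not need to be re-derived by a classical case analysis --- it is precisely Theorem~1.4 of \cite{quadrics}, which the paper simply cites (together with the observation that a line meets $Q$ in two general points, to cover the rational-plus-lines alternative).

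The genuine gap is in your final paragraph. The attachment points $\Gamma$ you produce are general in $Q$, not general in $\pp^3$: once $n \geq 10$, a set of $n$ points lying on a fixed quadric surface cannot be a general set of points of $\pp^3$, and the gluing criteria for BN-curves require the nodes to be general in the ambient space. Moreover, Theorem~1.6 of \cite{rbn} is the statement used elsewhere in this paper for attaching a \emph{rational normal curve} at general points; your $C''$ has degree $d - d'$, which is not $3$ in general, so that theorem does not apply as stated. The paper bridges exactly this gap with a deformation argument: \eqref{e3}, rewritten as $n \leq 2d'' - 1$, allows one to invoke interpolation for rational space curves (Theorem~1.1 of \cite{vogt}) to deform $C''$ so that $\Gamma$ becomes a general set of $n$ points of $\pp^3$, while $H^1(N_{C'}(-\Gamma)) = 0$ (Lemma~3.1 of \cite{rbn}) guarantees that this motion of $\Gamma$ lifts to a deformation of $C'$; only after the nodes have been made general in $\pp^3$ does the gluing criterion (Theorem~1.5 of \cite{rbn2}, which additionally uses $\rho(d, g, 3) \geq 0$ and the exclusion of the case $n = 2d' = 2d''$, again supplied by \eqref{e3}) apply. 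Without some such step, the assertion that $C' \cup C''$ lies in the Brill--Noether component --- the entire content of the proposition --- is unsupported.
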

\begin{proof}
Using \eqref{b3} if $g' \geq 0$, or \eqref{a3} if $g' \leq 0$,
there exists a curve $C' \subset \pp^3$ of degree $d'$ and genus $g'$,
which is either a general BN-curve
or the union of a general nondegenerate rational curve with independently general disjoint lines.
Since $(d', g') \notin E_3$, and a line meets $Q$ in $2$ general points, Theorem~1.4 of~\cite{quadrics}
implies $C' \cap Q$ is a general collection of $2d'$ points on $Q$.

From \eqref{c3}, we may let $C'' \subset Q$ be a general curve of bidegree $(1, d - d' - 1)$,
which is smooth and irreducible,
is rational by adjunction, and is of degree $d'' = d - d'$. Write $n = g - g' + 1$.

Upon rearrangement, \eqref{d3} becomes $n \geq 1$.
From \eqref{e3}, which upon rearrangement becomes $n \leq 2(d - d') - 1 = \dim H^0(\oo_{\pp^1 \times \pp^1}(1, d - d' - 1)) - 1$,
the curve $C''$ passes through $n$ general points in $Q$.
From \eqref{f3}, which upon rearrangement becomes $n \leq 2d'$,
the curve $C'$ also passes through $n$ general points in $Q$,
which we can assume consists of at least one point in each of the $\max(1, 1 - g')$ components of $C'$,
as $g \geq 0$ and $n \geq 1$ imply $\max(1, 1 - g') \leq n$.
We may thus arrange for $C'$ to meet $C''$ at a set $\Gamma$ of exactly $n$ points, which are general in $Q$,
so that $C' \cup C''$ is connected of degree $d' + d'' = d$ and genus $g' + n - 1 = g$.

Upon rearrangement, \eqref{e3} becomes $n \leq 2d'' - 1$.
By Theorem~1.1 of~\cite{vogt}, the curve $C''$ can therefore
be deformed to pass through $n$ general points.
Moreover, since $H^1(N_{C'}(-\Gamma)) = 0$ by Lemma~3.1 of~\cite{rbn},
any deformation of $\Gamma$ can be lifted to a deformation of $C'$.
We may thus deform $C'$ and $C''$ while preserving the incidence conditions,
so they meet at a general set of $n$ points in $\pp^3$.
We conclude that $C' \cup C''$ is a BN-curve by Theorem~1.5 of~\cite{rbn2},
in combination with \eqref{e3} which rules out the case $n = 2d' = 2d''$,
and our assumption that $\rho(d, g, 3) = 4d - 3g - 12 \geq 0$.
\end{proof}

\begin{defi}
We say that $d$, $g$, $d'$, and $g'$
\emph{satisfy $Y_3(d, g, d', g')$} if
\begin{align}
g' &\geq 0 \label{g3} \\
4 d' - 3 g' - 12 &\geq 0 \label{h3} \\
d - d' - 3 &\geq 0 \label{i3} \\
g - d + d' - g' + 3 &\geq 0 \label{j3} \\
3 d - g - 3 d' + g' - 5 &\geq 0 \label{k3}\\
d - g + d' + g' - 4 &\geq 0. \label{l3}
\end{align}
\end{defi}

\begin{prop} \label{prop2-3}
Let $Q \subset \pp^3$ be a smooth quadric, and
$d$, $g$, $d'$, and $g'$ be integers which satisfy
\begin{align*}
g &\geq 0 \\
4 d - 3g - 12 &\geq 0 \\
(d', g') &\notin E_3,
\end{align*}
and $Y_3(d, g, d', g')$.

Then there exists a BN-curve $C' \cup C'' \subset \pp^3$
of degree $d$ and genus $g$,
with $C''$ contained in $Q$ and $C'$ transverse to $Q$,
with $C' \cap Q$ a general set of points on $Q$,
and with $C'$ a general BN-curve of
degree $d'$ and genus $g'$.
\end{prop}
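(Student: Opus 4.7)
The plan is to imitate the proof of Proposition~\ref{prop-rat-3}, but to choose $C''$ on $Q$ of bidegree $(2, d - d' - 2)$ instead of $(1, d - d' - 1)$; this is the natural next choice because we now require $C'$ to be a genuine BN-curve, so allowing $C''$ to have positive arithmetic genus gives the extra flexibility needed to reach the correct total degree and genus.

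Concretely, first using \eqref{g3} and \eqref{h3} together with $(d', g') \notin E_3$, let $C' \subset \pp^3$ be a general BN-curve of degree $d'$ and genus $g'$; Theorem~1.4 of \cite{quadrics} then shows $C' \cap Q$ is a general set of $2d'$ points on $Q$. Using \eqref{i3} (which gives $d - d' - 2 \geq 1$), let $C'' \subset Q$ be a general smooth irreducible curve of bidegree $(2, d - d' - 2)$, which has arithmetic genus $d - d' - 3$. Set $n \colonequals g - g' - (d - d' - 3) + 1$, so that a nodal union $C' \cup C''$ meeting transversally at $n$ points has the correct degree $d$ and genus $g$. The inequality \eqref{j3} gives $n \geq 1$, while \eqref{k3} supplies the upper bound on $n$ that allows $C''$ to pass through a general set $\Gamma$ of $n$ points of $Q$. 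Since $C' \cap Q$ consists of $2d'$ general points, and the $Y_3$ system will force $n \leq 2d'$ (to be checked by a linear combination of its inequalities), we may arrange $\Gamma = C' \cap C''$.

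Having constructed the reducible curve $C' \cup C''$ of the correct degree and genus meeting at $n$ general points of $Q$, apply an interpolation result for $C''$ (in the same spirit as Theorem~1.1 of \cite{vogt} used in Proposition~\ref{prop-rat-3}, but now for a bidegree $(2, b)$ curve on a quadric rather than a rational normal scroll curve) to deform $C''$ within $\pp^3$ so that the images of $\Gamma$ become general in $\pp^3$. The vanishing $H^1(N_{C'}(-\Gamma)) = 0$ from Lemma~3.1 of \cite{rbn} allows $C'$ to be deformed simultaneously to accommodate these new gluing points. Finally, apply Theorem~1.5 of \cite{rbn2}, using \eqref{l3} and the assumption $\rho(d, g, 3) \geq 0$ to rule out the boundary case (analogous to the exclusion $n = 2d' = 2d''$ in the proof of Proposition~\ref{prop-rat-3}), to conclude that $C' \cup C''$ is a BN-curve.

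The hardest step I expect is the interpolation claim for $C''$: one must verify that a general curve of bidegree $(2, d - d' - 2)$ on $Q$, viewed as a BN-curve of degree $d - d'$ and genus $d - d' - 3$ in $\pp^3$, can in fact be moved within $\pp^3$ so that $\Gamma$ becomes $n$ general points in $\pp^3$ rather than merely $n$ general points on $Q$. This is where the precise form of \eqref{k3} must enter, and selecting the right interpolation theorem (or arguing directly via the normal bundle of a $(2,b)$-curve on a smooth quadric, decomposing as usual into the intrinsic normal bundle and $N_{Q/\pp^3}|_{C''}$) will be the main technical hurdle. Modulo this step, the proof is a direct adaptation of the argument of Proposition~\ref{prop-rat-3}, with \eqref{l3} handling the boundary case in the invocation of Theorem~1.5 of \cite{rbn2}.
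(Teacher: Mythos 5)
Your proposal follows the paper's proof essentially step for step: same bidegree $(2, d - d' - 2)$ for $C''$, same $n = g - d + d' - g' + 4$, same roles for \eqref{j3} (giving $n \geq 1$) and \eqref{l3} (giving $n \leq 2d'$), and the same smoothing via Theorem~1.5 of \cite{rbn2}. The one step you leave open --- interpolation for $C''$ so that $\Gamma$ becomes general in $\pp^3$ rather than merely in $Q$ --- is closed in the paper with no new machinery: since $C''$ has degree $d'' = d - d'$ and genus $g'' = d - d' - 3$, it satisfies $d'' = g'' + 3$ and is therefore a BN-curve by results of \cite{keem}, with $(d'', g'') \neq (6,4)$; hence Theorem~1.1 of \cite{vogt} applies exactly as in Proposition~\ref{prop-rat-3}, the required numerical hypothesis $n \leq 2d'' - 1$ being \eqref{k3} upon rearrangement (and the bound letting $C''$ pass through $n$ general points of $Q$ comes from adding \eqref{i3} and \eqref{k3} to get $n \leq 3(d - d' - 1) - 1$). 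One small misattribution: it is \eqref{k3} (i.e.\ $n \leq 2d'' - 1$), not \eqref{l3}, that rules out the boundary case $n = 2d' = 2d''$ in the application of Theorem~1.5 of \cite{rbn2}.
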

\begin{proof}
Using \eqref{g3} and \eqref{h3}, we may let
$C' \subset \pp^3$ be a general BN-curve of degree $d'$ and genus $g'$.
Since $(d', g') \notin E_3$, Theorem~1.4 of~\cite{quadrics}
implies $C' \cap Q$ is a general collection of $2d'$ points.

From \eqref{i3}, we may let $C'' \subset Q$ be a general curve of bidegree $(2, d - d' - 2)$,
which is smooth, irreducible, and nondegenerate, is of genus $g'' = d - d' - 3$ by adjunction, and is of degree $d'' = d - d'$.
Note that $d'' = g'' + 3$, so $C''$ is a BN-curve by results of \cite{keem};
also, $(d'', g'') \neq (6, 4)$.
Write $n = g - d + d' - g' + 4$.

Upon rearrangement, \eqref{j3} becomes $n \geq 1$.
Adding \eqref{i3} and \eqref{k3}, we obtain upon rearrangement
$n \leq 3(d - d' - 1) - 1 = \dim H^0(\oo_{\pp^1 \times \pp^1}(2, d - d' - 2)) - 1$,
and so $C''$ passes through $n$ general points in $Q$.
From \eqref{l3}, which upon rearrangement becomes $n \leq 2d'$,
the curve $C'$ also passes through $n$ general points in $Q$.
We may thus arrange for $C'$ to meet $C''$ at a set $\Gamma$ of exactly $n$ points, which are general in $Q$,
so that $C' \cup C''$ is connected (since $n \geq 1$),
of degree $d' + d'' = d$ and genus $g' + g'' + n - 1 = g$.

Upon rearrangement, \eqref{k3} becomes $n \leq 2d'' - 1$.
Since $C''$ is a BN-curve and $(d'', g'') \neq (6, 4)$,
Theorem~1.1 of~\cite{vogt} implies $C''$
can therefore
be deformed to pass through $n$ general points.
Moreover, since $H^1(N_{C'}(-\Gamma)) = 0$ by Lemma~3.1 of~\cite{rbn},
any deformation of $\Gamma$ can be lifted to a deformation of $C'$.
We may thus deform $C'$ and $C''$ while preserving the incidence conditions,
so they meet at a general set of $n$ points in $\pp^3$.
We conclude that $C' \cup C''$ is a BN-curve by Theorem~1.5 of~\cite{rbn2},
in combination with \eqref{k3} which rules out the case $n = 2d' = 2d''$,
and our assumption that $\rho(d, g, 3) = 4d - 3g - 12 \geq 0$.
\end{proof}

\subsection{Curves of Extreme Degree}

As in Section~\ref{extp}, we first deal with the
``easy'' cases of Theorem~\ref{thm:space};
i.e.\ with those cases where the 
restriction map is ``far'' from being an isomorphism.

\begin{defi}
We say that integers $k$, $d$, and $g$ \emph{satisfy $U_3(k, d, g)$} if
\begin{align*}
g &\geq 0 \\
4 d - 3 g - 12 &\geq 0 \\
(k - 1) d - g - \binom{k + 2}{3} &\geq 0 \\
\binom{k + 2}{2} - d - 1 &\geq 0.
\end{align*}
\end{defi}

In this section, we show it suffices to verify Theorem~\ref{thm:space} when
$k$, $d$, and $g$ satisfy $U_3(k, d, g)$.
If $(d, g) = (6, 4)$, it is a classical fact that $C$ is the complete intersection of a quadric
and cubic surface, from which the conclusion of Theorem~\ref{thm:space} may easily be verified.
We thus suppose for this section that $(d, g) \neq (6, 4)$, and that $k$, $d$, and $g$
do not satisfy $U_3(k, d, n)$.

In this case, Theorem~1.5 of~\cite{quadrics} implies, for a plane $H \subset \pp^3$,
that $C \cap H$ is a general set of $d$ points in $H$.
We have an exact sequence
\[0 \to \ii_{C \subset \pp^3}(k - 1) \to \ii_{C \subset \pp^3}(k) \to \ii_{C \cap H \subset H}(k) \to 0.\]
Thus, to show $H^i(\ii_{C \subset \pp^3}(k)) = 0$,
it suffices to show $H^i(\ii_{C \subset \pp^3}(k - 1)) = H^i(\ii_{C \cap H \subset H}(k)) = 0$,
or equivalently that the restriction maps
\[H^0(\oo_H(k)) \to H^0(\oo_{C \cap H}(k)) \tand H^0(\oo_{\pp^3}(k - 1)) \to H^0(\oo_C(k - 1))\]
are either both injective or both surjective.
Since these maps are of maximal rank by the generality of $C \cap H$ and our inductive hypothesis
respectively, it suffices to show that either
\[\dim H^0(\oo_H(k)) \geq \dim H^0(\oo_{C \cap H}(k)) \tand \dim H^0(\oo_{\pp^3}(k - 1)) \geq \dim H^0(\oo_C(k - 1)),\]
or
\[\dim H^0(\oo_H(k)) \leq \dim H^0(\oo_{C \cap H}(k)) \tand \dim H^0(\oo_{\pp^3}(k - 1)) \leq \dim H^0(\oo_C(k - 1)).\]

If $k = 2$ and $d \leq g + 3$,
then $\dim H^0(\oo_C(k - 1)) = 4 = \dim H^0(\oo_{\pp^3}(k - 1))$, so we are done;
we may thus suppose $d \geq g + 3$ if $k = 2$, so
$\dim H^0(\oo_C(k - 1)) = (k - 1)d + 1 - g$.
It therefore remains to show that in this case, either
\[d \leq \binom{k + 2}{2} \tand (k - 1)d + 1 - g \leq \binom{k + 2}{3},\]
or
\[d \geq \binom{k + 2}{2} \tand (k - 1)d + 1 - g \geq \binom{k + 2}{3}.\]
But since $k$, $d$, and $g$ do not satisfy $U_3(k, d, n)$, we in particular have
\[d \geq \binom{k + 2}{2} \tor (k - 1)d + 1 - g \leq \binom{k + 2}{3}.\]
It thus remains to show, under the assumption that $d \geq g + 3$ if $k = 2$, that
\[d > \binom{k + 2}{2} \imp (k - 1)d + 1 - g \geq \binom{k + 2}{3}.\]

If $k = 2$, the right-hand inequality becomes our assumption $d \geq g + 3$,
so it remains to consider the case $k \geq 3$.
In this case, multiplying the inequality $d \geq \binom{k + 2}{2} + 1$ by $3k - 7$,
adding $4d - 3g - 12 \geq 0$, and rearranging, we obtain
\[(k - 1)d + 1 - g \geq \frac{3k^3 + 2k^2 - 9k + 2}{6} \geq \binom{k + 2}{3},\]
as desired.

\subsection{The Inductive Argument \label{inductive-space}}

To prove Theorem~\ref{thm:space}, we combine Propositions~\ref{hir3},
\ref{prop-rat-3}, and \ref{prop2-3} to conclude that it suffices to check:

\begin{lm} \label{lm:space}
Let $k \geq 2$, and $d$ and $g$ be integers satisfying
$U_3(k, d, g)$.
Then there exist integers $d'$ and $g'$ satisfying
\begin{itemize}
\item $X_3(d, g, d', g')$, $I_3(k, d, g, d', g')$, and $(d', g') \notin E_3$;
\item $Y_3(d, g, d', g')$, $I_3(k, d, g, d', g')$, and $(d', g') \notin E_3$;
\item $X_3(d, g, d', g')$, $S_3(k, d, g, d', g')$, and $(d', g') \notin E_3$; or
\item $Y_3(d, g, d', g')$, $S_3(k, d, g, d', g')$, and $(d', g') \notin E_3$.
\end{itemize}
\end{lm}
\begin{proof}
This will be deferred to Appendix~\ref{code:space}.
\end{proof}

\section{Inequalities from Section~\ref{sec:degenerations} \label{app:inequalities}}

In this appendix, we give code in {\sc sage}
to check or create algebraic expressions for all inequalities that appear in
Sections~\ref{sec:degenerations}, \ref{deg-lines}, and \ref{deg-space}.

\lstinputlisting{inequalities.py}

\section{Computations for Sections~\ref{sec:inductive}, \ref{inductive-lines}, and \ref{inductive-space} \label{app:comp}}

To carry out the computations in Section~\ref{sec:inductive},
we use code in {\sc sage} to study systems of inequalities which are linear
in most --- that is, all but zero, respectively one, respectively two, of the variables --- with coefficients that are
constants; respectively polynomials in the remaining variable;
respectively polynomials in the remaining two variables,
which we term $r$ and $k$, and the binomial coefficient $B \colonequals \binom{r + k}{k}$,
which are linear in $B$.
We term these remaining zero, one, or two variables the \emph{base variables}.

We represent such inequalities as linear polynomials over a \emph{base ring}
defined in {\sc sage}, of the form \\
\verb|QQ|, \\
respectively \\
\verb|QQ['x'].fraction_field()|, \\
respectively \\
\verb|QQ['r, k']['B'].fraction_field()|.

We suppose the coefficients of these polynomials (except for the constant coefficient)
are integer-valued (for any integer values of the base variables).

In Appendix~\ref{app:covers},
we define a function \texttt{covers}: This function takes
as input three such systems of inequalities, plus a list of lower bounds of length
zero, respectively one, respectively two.
It then attempts to prove that, subject to the given lower bounds on the base variables,
every assignment of integers satisfying the third system of inequalities may be
completed to an assignment of integers which would
satisfy either the first or second system
if all the constant coefficients appearing in the first or second
systems were rounded up (i.e.\ replaced by their ceiling).
When these systems are declared, we must order the variables for the first
two systems to begin with the variables occurring in the third.

\paragraph{Example:}
Suppose there is one base variable $r$, and let
\begin{align*}
X &= [g - d, r - g] \\
Y &= [g - 3, d - g, h - d, r^2 - h] \\
Z &= [d, r^2 - d].
\end{align*}
Suppose that $Z$ is thought of as a system of inequalities in $r$ and $d$,
represented in {\sc sage} as a list of elements of $\qq[r][d]$.
To represent
$X$ and $Y$, we must order the variables correctly, starting with the
variables for $Z$:
Namely, suppose that $X$, respectively $Y$, is thought of as a system of
inequalities in $r$, $d$, $g$, 
respectively in $r$, $d$, $g$, $h$; then this would be
represented in {\sc sage} as a list
of elements of $\qq[r][d, g]$, respectively $\qq[r][d, g, h]$:

\lstinputlisting{test.py}

Then \texttt{covers(X, Y, Z, [3])} will hopefully return \texttt{True} ---
since if $r \geq 3$ is an integer,
then for every integer $d$ with $0 \leq d \leq r^2$,
either there exists an integer $g$ with $d \leq g \leq r$,
or there exists integers $g$ and $h$ with $3 \leq g \leq d$ and $d \leq h \leq r^2$.
However, \texttt{covers(X, Y, Z, [2])} must return \texttt{False} ---
since if $r = 2$, then there is some integer $d$ (namely $d = 3$) for which
no such integer $g$, respectively no such integers $g$ and $h$, exist.
(Indeed, the above outputs \texttt{True} and \texttt{False} respectively,
in less than half a second.)

\bigskip

To complete the computations from Section~\ref{sec:inductive},
we run the \texttt{covers} function with the following inputs
(on an installation of {\sc sage 8.0} running on a 2.4 GHz CPU).

\subsection{For Lemma~\ref{lm:w} \label{code:w}}
Running the following code outputs \texttt{True} in about 20 minutes: 
\lstinputlisting{w.py}

\subsection{For Lemma~\ref{lm:v3} \label{code:v3}}
Running the following code outputs \texttt{True} in about 20 minutes: 
\lstinputlisting{v3.py}

\subsection{For Lemma~\ref{lm:v-higher} \label{code:v-higher}}
Running the following code outputs \texttt{True} in about 2 days: 
\lstinputlisting{v-higher.py}

\subsection{For Lemma~\ref{lm:lines} \label{code:lines}}
Running the following code outputs \texttt{True} in about 30 seconds: 
\lstinputlisting{lines.py}

\subsection{For Lemma~\ref{lm:space} \label{code:space}}
Running the following code outputs \texttt{True} in about 15 seconds: 
\lstinputlisting{space.py}

\section{Code for the \texttt{covers} function \label{app:covers}}

To implement the \texttt{covers} function, our first strategy is to
compute, for a given system of inequalities of our form,
a new system of inequalities so that every assignment of integers satisfying
the new system may be completed to one satisfying the old system.
For this, we use the following result:

\begin{lm} \label{lm:eliminate}
Let $a_i$, $b_i$, $c_j$, and $d_j$ be sets of integers,
with the $b_i$ and $d_j$ positive.
There exists a real number, respectively integer, $n$ satisfying the inequalities
\[n \leq \frac{a_i}{b_i} \tand n \geq \frac{c_j}{d_j}\]
if, for each $(i, j)$,
\[a_i d_j - b_i c_j \geq 0 \quad \text{respectively} \quad a_i d_j - b_i c_j \geq (b_i - 1)(d_j - 1).\]
\end{lm}
\begin{proof}
Note that the collection of intervals $[\frac{c_j}{d_j}, \frac{a_i}{b_i}]$
are closed under intersection. Their intersection is thus nonempty, respectively
contains an integer, if and only if each such interval does.

It thus remains to show that if $a$, $b$, $c$, and $d$ are integers,
with $b$ and $d$ positive,
then there exists a real number, respectively integer, $n$ satisfying the inequalities
\begin{equation} \label{interval}
\frac{c}{d} \leq n \leq \frac{a}{b}
\end{equation}
if
\[a d - b c \geq 0 \quad \text{respectively} \quad a d - b c \geq (b - 1)(d - 1).\]

The first part is clear: There exists a real number $n$ satisfying \eqref{interval}
if and only if $\frac{c}{d} \leq \frac{a}{b}$, which upon rearrangement becomes
$a d - b c \geq 0$.
For the second part, we note that an integer $n$ satisfies \eqref{interval}
if and only if $n$ satisfies
\[\frac{c - 1}{d} < n < \frac{a + 1}{b}.\]
Since any interval in $\rr$ of length strictly greater than $1$
contains an integer,
such an integer $n$ exists provided that
\[\frac{a + 1}{b} - \frac{c - 1}{d} > 1,\]
which in turn holds provided that
\[\frac{a + 1}{b} - \frac{c - 1}{d} \geq 1 + \frac{1}{bd}.\]
Upon rearrangement, this gives the desired condition
$a d - b c \geq (b - 1)(d - 1)$.
\end{proof}

Iteratively applying Lemma~\ref{lm:eliminate},
we reduce our problem to checking that every assignment
of integers satisfying a given system of inequalities of our form
satisfies one of two other such systems of inequalities.

\begin{rem} \label{rem:order}
If the real numbers version of
Lemma~\ref{lm:eliminate} is applied iteratively to eliminate several
variables,
the region described by the resulting system of inequalities is independent
of the order of the variables to which Lemma~\ref{lm:eliminate} is applied.

However, if the integral version of Lemma~\ref{lm:eliminate} is applied
iteratively, this is no longer the case.
Since our implementation of
the \texttt{covers} function eliminates variables in the
reverse direction to the direction they were declared, the order
of declaration may affect the output (even subject to the constraints
on this order explained in Section~\ref{app:comp}).
\end{rem}

For given values of the base variables,
these systems of inequalities become convex polyhedra.
This problem can thus be solved using the following result:

\begin{lm} \label{lm:poly}
Let $P \subset \rr^n$ be a compact convex polyhedron, and $C_1, C_2 \subset \rr^n$ be
convex sets. Then $P \subset C_1 \cup C_2$ if and only if:
\begin{enumerate}
\item \label{vertex} Every vertex of $P$ is contained in either $C_1$ or $C_2$; and
\item \label{edge} Every edge of $P$ joining a vertex not contained in $C_1$ to a vertex not contained in $C_2$
meets $C_1 \cap C_2$.
\end{enumerate}
\end{lm}
\begin{rem}
To check Condition~\ref{edge},
it suffices to check that the every such edge, when extended to a line, meets $C_1 \cap C_2$.
Indeed, if one endpoint of the edge is not contained in $C_1$, and the other is not contained in $C_2$,
then $C_1 \cap C_2$ cannot meet this line outside the edge.
\end{rem}

\begin{proof}
The ``only if'' is clear. To see the ``if'',
we argue by induction on $n$. The result for $n = 0$ and $n = 1$ is clear.

When $n = 2$, we argue by induction on the number of vertices of $P$.
Since the result when $P$ is degenerate (i.e.\ has two or fewer vertices) follows from our inductive hypothesis,
we begin with the case when $P$ is a triangle.
If all three vertices are contained in the same convex set,
the result is clear. It thus suffices to consider the case when two vertices $x$ and $y$ are contained in $C_1$,
while the third vertex $z$ is contained in $C_2$.
By assumption $xz$ and $yz$ meet $C_1 \cap C_2$ at points $x'$ and $y'$ respectively.
The triangle $xyz$ then decomposes into the union of the quadrilateral $xyy'x'$ and the
triangle $x'y'z$,
which are contained in $C_1$ and $C_2$ respectively.

To finish the inductive argument for the case $n = 2$,
we first consider the case when $P$ has two non-adjacent vertices
which are contained in the same convex set. In this case,
the line joining these two vertices partitions $P$ into two polygons
with fewer vertices, each of which satisfies our inductive hypothesis.
It thus remains to consider the case when no two non-adjacent vertices
are contained in the same convex set. Since $P$ is not a triangle, this forces
all vertices contained in $C_1$ to lie on a single edge, and similarly for $C_2$.
Thus $P$ must be a quadrilateral $xyzw$, with two adjacent vertices $x$ and $y$ contained in $C_1$,
and the other two adjacent vertices $z$ and $w$ contained in $C_2$.
By assumption, the edges $yz$ and $xw$ meet $C_1 \cap C_2$ at points
$p$ and $q$ respectively.
The quadrilateral $xyzw$ then decomposes into the union of the quadrilateral $xypq$ and the quadrilateral
$pqwz$, which are contained in $C_1$ and $C_2$ respectively.

For the inductive argument on $n$, we first note that by our inductive hypothesis,
the boundary of $P$ is contained in $C_1 \cup C_2$.
It thus remains to show $x \in C_1 \cup C_2$,
where $x$ is an arbitrary point in the interior of $P$.
For this, we let $H$ be any hyperplane containing $x$.
Since $n \geq 3$, every vertex and every edge of $H \cap P$
lies on the boundary of $P$. Applying our inductive hypothesis again,
we conclude $x \in H \cap P \subset C_1 \cup C_2$ as desired.
\end{proof}

For any fixed value of the base variables, Lemmas~\ref{lm:eliminate}
and~\ref{lm:poly} can be used to show 
every assignment of integers satisfying the third system of inequalities may be
completed to an assignment of integers which would
satisfy either the first or second system
if all the constant coefficients appearing in the first or second
systems were rounded up (i.e.\ replaced by their ceiling), as desired.

However, in order to apply these results in a uniform way,
we need a result which determines the sign of certain polynomials.
For polynomials of one variable, this is straight-forward (just find the real roots).
For polynomials of two variables, we use the following result:

\begin{lm}
A polynomial $P(r, k)$
in two variables $r$ and $k$ is positive for all $r \geq r_0$
and $k \geq k_0$, provided that:
\begin{enumerate}

\item \label{epos} Every monomial on the outside of the Newton polygon has positive
coefficient.

\item \label{sposk} The leading coefficient with respect to $r$ is positive for $k \geq k_0$.

\item \label{sposr} The leading coefficient with respect to $k$ is positive for $r \geq r_0$.

\item \label{boundary} The polynomial $P(r_0, k)$ is positive for
$k \geq k_0$.

\item \label{branch} The value of $k_0$ exceeds all branch points of the projection
onto the $k$-axis of $P(r, k) = 0$.
\end{enumerate}
\end{lm}
\begin{proof}
Condition~\ref{epos} guarantees there is some $r_0'$ and $k_0'$
so that $P(r, k) > 0$ for $r \geq r_0'$ and $k \geq k_0'$.
Condition~\ref{sposk} guarantees that for every $k_0' \geq k_0$,
there is some $r_0''$ so that $P(r, k) > 0$ for $k_0 \leq k \leq k_0'$ and $r \geq r_0''$.
Condition~\ref{sposr} guarantees that for every $r_0' \geq r_0$,
there is some $k_0''$ so that $P(r, k) > 0$ for $r_0 \leq r \leq r_0'$ and $k \geq k_0''$.
Putting these together, we conclude that
\[R \colonequals \{(r, k) \in \rr^2 : r \geq r_0, \, k \geq k_0, \, \text{and} \, P(r, k) \leq 0\}\]
is compact.

Assume for sake of contradiction that
$R$ is nonempty. Then we may let $(r', k')$ be the point in $R$ with maximal $k$-coordinate;
necessarily, $P(r', k') = 0$.
Applying Lagrange multipliers, we conclude
that either $r' = r_0$ or $\left.\frac{\partial P}{\partial r}\right|_{(r, k) = (r', k')} = 0$;
these contradict Conditions~\ref{boundary} and~\ref{branch} respectively.
\end{proof}

Combining these techniques with brute-force search
where appropriate, we obtain the following implementation of the \texttt{covers} function:

\lstinputlisting{cover.py}

\bibliographystyle{amsplain.bst}
\bibliography{mrcbib}

\end{document}